\newlist{myitemize}{itemize}{1}
\setlist[myitemize,1]{leftmargin=0.2in}
\newlist{myitemize2}{itemize}{1}
\setlist[myitemize2,1]{leftmargin=0.4in}
\patchcmd{\@addmarginpar}{\ifodd\c@page}{\ifodd\c@page\@tempcnta\m@ne}{}{}
\providecommand*{\cupdot}{%
  \mathbin{%
    \mathpalette\@cupdot{}%
  }%
}
\newcommand*{\@cupdot}[2]{%
  \ooalign{%
    $\m@th#1\cup$\cr
    \hidewidth$\m@th#1\cdot$\hidewidth
  }%
}
\providecommand*{\bigcupdot}{%
  \mathop{%
    \vphantom{\bigcup}%
    \mathpalette\@bigcupdot{}%
  }%
}
\newcommand*{\@bigcupdot}[2]{%
  \ooalign{%
    $\m@th#1\bigcup$\cr
    \sbox0{$#1\bigcup$}%
    \dimen@=\ht0 %
    \advance\dimen@ by -\dp0 %
    \sbox0{\scalebox{2}{$\m@th#1\cdot$}}%
    \advance\dimen@ by -\ht0 %
    \dimen@=.5\dimen@
    \hidewidth\raise\dimen@\box0\hidewidth
  }%
}
\newcommand{\cF}{\mathcal{F}}
\newcommand{\cH}{\mathcal{H}}
\newcommand{\cNl}{\mathcal{N}_\textrm{l}}
\newcommand{\cNlslow}{\mathcal{N}_\textrm{l}^\text{S}}
\newcommand{\cNlfast}{\mathcal{N}_\textrm{l}^\text{F}}
\newcommand{\cO}{\mathcal{O}}
\newcommand{\cTl}{\mathcal{T}_\textrm{l}}
\newcommand{\cTlslow}{\mathcal{T}_\textrm{l}^\text{S}}
\newcommand{\cTlfast}{\mathcal{T}_\textrm{l}^\text{F}}
\newcommand{\cTg}{\mathcal{T}_\textrm{g}}
\newcommand{\cTgslow}{\mathcal{T}_\textrm{g}^\text{S}}
\newcommand{\cTgfast}{\mathcal{T}_\textrm{g}^\text{F}}
\newcommand{\cM}{\mathcal{M}}
\newcommand{\cQ}{\mathcal{Q}}
\newcommand{\cX}{\mathcal{X}}
\newcommand{\beq}{\begin{equation}}
\newcommand{\eeq}{\end{equation}}
\def\bals#1\eals{\begin{align*} #1 \end{align*}}
\def\bal#1\eal{\begin{align} #1 \end{align}}
\newcommand\Dom{\Omega}
\newcommand\RR{\mathbb{R}}
\newcommand\bbS{\mathbb{S}}
\newcommand\TT{\mathbb{T}}
\newcommand\NN{\mathbb{N}}
\newcommand\VV{\mathbb{V}}
\newcommand\Htrain{{\cH_\textrm{g}}}
\newcommand\Ntrain{{N_\textrm{g}}}
\newcommand\Lap\Delta
\newcommand\abs[1]{\left\lvert #1 \right\rvert}
\newcommand\dx{\,\mathrm{d}x}
\def\bpde#1\epde{\[\left\{\begin{aligned}#1\end{aligned}\right. \]}
\def\inbpde#1\inepde{\left\{\begin{aligned}#1\end{aligned}\right.}
\def\binpde#1\einpde{\left\{\begin{aligned}#1\end{aligned}\right.}
\newcommand\Norm[2]{\left\lVert { #1 } \right\rVert_{#2}}
\def\cD{\mathcal{D}}
\def\cV{\mathcal{V}}
\def\half{\frac{1}{2}}
\def\bv{\mathbf{v}}
\def\bfC{\mathbf{C}}
\def\bfI{\mathbf{I}}
\def\bfT{\mathbf{T}}
\def\bfI{\mathbf{I}}
\def\b0{\mathbf{0}}
\def\bv{\mathbf{v}}
\def\bmu{\mbox{\boldmath$\mu$}}
\def\bfeta{t,\boldsymbol{\mu}}
\newcommand\bfetai[1]{t_{#1},\boldsymbol{\mu}_{#1}}
\def\eps{\varepsilon}
\def\bbmat{\begin{bmatrix}[r]}
\def\ebmat{\end{bmatrix}}
\newcommand{\barr}{\begin{array}}
\newcommand{\ea}{\end{array}}
\newcommand{\bea}{\begin{eqnarray}}
\newcommand{\eea}{\end{eqnarray}}
\newcommand{\bt}{\begin{table}}
\newcommand{\et}{\end{table}}
\DeclareMathOperator\Id{Id}
\DeclareMathOperator\Span{span}
\DeclareMathOperator\argmax{argmax}
\DeclareMathOperator\supp{supp}
\DeclareMathOperator\sign{sign}
\DeclareMathOperator\setint{int}
\theoremstyle{plain}
\numberwithin{equation}{section}
\newcommand\fixsp[1]{{\tt {\footnotesize #1}}}
\newcommand\uh{u_\delta}
\newcommand\urb{u_\textrm{rb}}
\newcommand\cMh{\mathcal{M}_\delta}
\newcommand\VVh{\mathbb{V}_\delta}
\newcommand\TTrb{\mathbb{T}_\textrm{rb}}
\newcommand\VVrb{\mathbb{V}_\textrm{rb}}
\newcommand\hT{\hat{T}}
\newcommand\hTpb{\hat{T}_{(\bfeta)}}
\newcommand\tfin{t_\textrm{F}}
\newcommand\Nl{{N_{\textrm{l}}}}
\newcommand\Ng{{N_{\textrm{g}}}}
\newcommand\cHl{\mathcal{H}_{\textrm{l}}} 
\newcommand\cHlslow{\mathcal{H}_{\textrm{l}}^\text{S}} 
\newcommand\cHlfast{\mathcal{H}_{\textrm{l}}^\text{F}} 
\newcommand\cHg{\mathcal{H}_{\textrm{g}}}
\newcommand\cHgslow{\mathcal{H}_{\textrm{g}}^\text{S}} 
\newcommand\cHgfast{\mathcal{H}_{\textrm{g}}^\text{F}}
\newcommand\cSl{\mathcal{S}_{\textrm{l}}}
\newcommand\cSlslow{\mathcal{S}_{\textrm{l}}^\text{S}}
\newcommand\cSlfast{\mathcal{S}_{\textrm{l}}^\text{F}}
\newcommand\cSg{\mathcal{S}_{\textrm{g}}}
\newcommand\cSgslow{\mathcal{S}_{\textrm{g}}^\text{S}}
\newcommand\cSgfast{\mathcal{S}_{\textrm{g}}^\text{F}}
\definecolor{darkgreen}{rgb}{0.01, 0.75, 0.24}%
\newcommand{\TheTitle}{Manifold Approximations via Transported Subspaces:\\ 
model reduction for transport-dominated problems}
\newcommand{\remove}[1]{\textcolor{red}{[Text removed.]}}%
\begin{document}

\ifpdf
\DeclareGraphicsExtensions{.pdf, .jpg, .tif}
\else
\DeclareGraphicsExtensions{.eps, .jpg}
\fi

\title{\TheTitle}

\author{Donsub Rim%
  \thanks{Courant Institute of Mathematical Sciences, %
  New York University, New York, NY 10012 %
  ({\tt dr1653@nyu.edu}, {\tt pehersto@cims.nyu.edu}).}%
  \and%
  Benjamin Peherstorfer\footnotemark[1]
  \and%
  Kyle T. Mandli%
  \thanks{Department of Applied Physics and Applied Mathematics, %
  Columbia University, New York, NY 10027 %
  ({\tt kyle.mandli@columbia.edu}).}%
}
\maketitle

\begin{abstract} 
This work presents a method for constructing online-efficient reduced models of
large-scale systems governed by parametrized nonlinear scalar conservation
laws. The solution manifolds induced by transport-dominated problems such as
hyperbolic conservation laws typically exhibit nonlinear structures, which
means that traditional model reduction methods based on linear approximations
are inefficient when applied to these problems. In contrast, the approach
introduced in this work derives reduced approximations that are nonlinear by
explicitly composing global transport dynamics with locally linear
approximations of the solution manifolds. A time-stepping scheme evolves the
nonlinear reduced models by transporting local approximation spaces along the
characteristic curves of the governing equations. The proposed computational
procedure allows an offline/online decomposition and is online-efficient in
the sense that the complexity of accurately time-stepping the nonlinear reduced
model is independent of that of the full model. Numerical experiments with
transport through heterogeneous media and the Burgers' equation show orders of
magnitude speedups of the proposed nonlinear reduced models based on
transported subspaces compared to traditional linear reduced models and full
models.
\end{abstract}

\begin{keywords}
Model reduction, transport-dominated problems,  nonlinear approximations, transported subspaces
\end{keywords}

\begin{AMS}
\texttt{78M34}, \texttt{41A46}, \texttt{35F20}, \texttt{78M12}
\end{AMS}

\section{Introduction}
Model reduction for transport-dominated problems can be challenging because the
corresponding solution manifolds typically exhibit high dimensional features
and so can only be well-approximated by subspaces with high dimension; that is,
the Kolmogorov $N$-widths of the solution manifolds decay slowly
\cite{Ohlberger16,Greif19}.  To overcome this challenge of slowly decaying
Kolmogorov $N$-widths, this work introduces transported subspaces, which are
subspaces conjoined with nonlinear transportation.  The corresponding
approximations are nonlinear and so can lead to efficient reduced models even
if the Kolmogorov $N$-widths of the solution manifolds decay slowly. 

The difficulty in developing reduced models for transport-dominated problems
was recognized in \cite{rowley00}, which also introduced a template-fitting
approach. Subsequent works to overcome this problem fall broadly into two
groups. The first group attempts to find relevant nonlinear transformations of
the solution manifolds. These include the method of freezing
\cite{Beyn04,Ohlberger13}, or approximated Lax-Pairs \cite{Gerbeau14}. The work
\cite{iollo14} made a connection to the optimal transport problem in finding a
nonlinear low-dimensional structure. More works in this group include shock
curve estimation \cite{taddei14}, shifted proper orthogonal decomposition
(sPOD) \cite{schulze18}, Lagrangian basis method \cite{Mojgani17}, greedy
generalization of template-fitting \cite{rim17reversal}, transformed snapshot
interpolation \cite{welper17,welper19}, a machine-learning approach based on
autoencoders \cite{lee19}, characteristic dynamic mode decomposition
\cite{sesterhenn2019}, registration methods \cite{taddei19}, Wasserstein
barycenters \cite{ehrlacher19}, calibrated manifolds
\cite{Cagniart2019,nonino19}, unsupervised traveling wave identification with
shifting truncation \cite{mendible19}, and a generalization of the moving
finite element method \cite{black19}. For nonlinear hyperbolic conservation
laws \cite{fvmbook}, there is a relationship between optimal transport and
scalar conservation laws \cite{bolley05}, and it was demonstrated in
\cite{rim18a,rim18b,rim19b} that a nonlinear interpolation procedure called
displacement interpolation by pieces (DIP) is able to capture the nonlinear
transport structure, which leads to localized reduced models. 
  
The second group focuses on online adaptive methods that update local reduced
spaces depending on time and parameters. For example, the work
\cite{carlberg15} proposes an adaptive refinement of reduced bases similar to
$h$-adaptive finite-element approaches. In \cite{pehersto15adeim,P18AADEIM},
the locality of coherent structures that are transported through the spatial
domain is exploited to adapt spaces from only a few samples. While these
adaptive procedures achieve speedups compared to traditional reduced models,
the corresponding computational methods have costs that scale with the
complexity of the full-model discretization. Additional relevant works
include dynamical low-rank approximation \cite{sapsis09} and dynamically
orthogonal (DO) schemes \cite{koch07}, the use of $L^1$-norm minimization
\cite{amsallem} and the Petrov-Galerkin space-time approach \cite{yano14}.

The model reduction method proposed in this work is called Manifold
Approximations via Transported Subspaces (MATS) and merges ideas from both
groups mentioned above. MATS produces an adaptive scheme that uses nonlinear
transformations to generate an adaptive reduced basis during the online stage.
Although the effect of the transformation is nonlinear, the transformation
itself is built from a linear combination of low-rank transport modes. This
specific structure enables the derivation of  the time-dependent adaptation
directly from the governing equations. Combining the MATS procedure with a
projection-based time update results in online-efficient reduced models, which
means that the cost complexity the reduced models at each time-step is
independent of the number of degrees of freedom of the full-model
discretizations.

Three main ideas are introduced in this work:
\begin{itemize}
    \item[(1)] the transported subspace generated by low-rank transport modes that are explicit to derive and compute, which provide a low-rank approximation to the transport dynamics necessary to approximate the solution manifold,
    \item[(2)] interpolation particles that enable online-efficient adaptations of transported subspaces,
    \item[(3)] an online-efficient time-stepping scheme for the proposed reduced model.
\end{itemize}

These ideas share commonalities with those in existing works: Various forms
of low-rank transformation of the domain were considered in
\cite{iollo14,rim18a,taddei19}; The idea of using a transformation of the
domain along with an interpolation method is well-known and standard
\cite{crb-book}. However, we modify and combine these ideas for efficient
computations during the online stage: We are not aware of previous methods that
achieve the online complexity and accuracy for the problems we consider here.
In this work, we restrict our scope to the parametrized classical solutions of
non-homogeneous nonlinear scalar conservation laws with constant influx
boundary condition. 

This manuscript is organized as follows. In \cref{sec:prelim}, we formulate our
problem setup and set forth basic notations. In \cref{sec:tsub}, we introduce a
notion of a Kolmogorov width for transported subspaces. In \cref{sec:offline},
we provide a concrete procedure for constructing the transported subspaces. In
\cref{sec:online}, we construct the reduced model by building on the notion of
interpolation particles and the change of basis formula. An algorithmic summary
is given in \cref{sec:alg}. In \cref{sec:numerics}, we provide numerical
examples that illustrate the speedups obtained with our method. The
extensibility of the methods introduced here to more general settings is
discussed in \cref{sec:conclusion}.

\clearpage

\section{Problem formulation and preliminaries}
\label{sec:prelim}

In this section, we describe the parametrized partial differential equations (PDE) under consideration and introduce notations and definitions to be used throughout the work.

\subsection{Parametrized scalar conservation laws}

For the spatial domain $\Dom := (x_\ell,x_r) \subset \RR$ with $|x_\ell|, |x_r| < \infty$, and for parameters $\bmu \in \cD \subset \RR^d$, we seek a real-valued solution $u \in C^1(\Dom \times (0,\tfin) \times \cD)$ with $|\tfin| < \infty$ that satisfies
    \begin{equation}
        \left\{ \begin{aligned}
           \partial_t u + \partial_x [f(u,x;\bmu)]
           &= 
           \psi(u,x;\bmu), 
            \qquad 
            (x,t) \in \Dom \times (0,\tfin) ,\\
            u(x,0;\bmu) &= u_0(x), \\
            u(x_\ell,t;\bmu) &= u_0(x_\ell),
        \end{aligned} \right.
    \label{eq:prob}
    \end{equation}
in which $f(\cdot , \cdot ; \bmu) \in C^\infty(\RR^2)$ is a convex function in
the first variable and $\psi(\cdot, \cdot \, ;\bmu) \in C^\infty(\RR^2)$. The
initial and boundary values at $x = x_\ell$ are independent of time and
parameter. For a fixed parameter $\bmu$, the initial boundary value problem
\eqref{eq:prob} has been studied extensively both analytically and numerically
\cite{lax73claw,serre99,fvmbook}. In this work, we restrict our attention
to the classical solutions in $C^1$.

\subsection{Reduced approximation}

Let us denote the time-parameter domain by $\cH := (0,\tfin) \times \cD$. For all $(t,\bmu) \in \cH$, the solution $u(\cdot,t;\bmu)$ of \cref{eq:prob} is in the vector space $\VV := C^0(\Dom)$ equipped with the inner product $(f,g) := \int_\Dom fg \dx$ for $f,g \in \VV$. The inner product induces the norm $\Norm{\cdot}{\VV}$. Let us denote by $u_\delta \in  C^0(\Dom \times (0,\tfin) \times \cD)$ a continuous piecewise polynomial approximation of $u$, to which we refer to as the full-model solution in the following. Let $\VVh \subset \VV$ be the $N_\delta$-dimensional space of continuous piecewise polynomial functions defined on a grid $x_\ell < x_\ell + \Delta x < ... < x_\ell + (N_\delta-1) (\Delta x) = x_r$ of uniform width $\Delta x = \delta$ (with $(N_\delta-1) \delta = x_r - x_\ell$). Let $\{\varphi_n\}_{n = 1}^{N_\delta}$ be a basis of $\VVh$, then suppose we can represent $u_\delta$ as a linear combination of this basis,
\begin{equation}
    \uh(x,t;\bmu) = \sum_{n=1}^{N_\delta} b_n(\bfeta) \varphi_n(x),
    \label{eq:fom_repr}
\end{equation}
with coefficients $\{b_n(\bfeta)\}_{n=1}^{N_\delta}$. We assume that $u_\delta$ is a uniformly accurate approximation of the classical solution $u$, i.e., there exists a constant $C > 0$ such that for all $(t,\bmu) \in \cH$, the bound
\begin{equation}
    \Norm{u(\cdot,t;\bmu) - u_\delta(\cdot,t;\bmu)}{\VV} \le C \delta^r
    \label{eq:fm_error}
\end{equation}
holds for some $r \ge 1$. The solution manifold $\cMh$ corresponding to the full-model solutions of \cref{eq:prob} is
\beq
    \cMh := \{\uh(\cdot,t;\bmu) : (\bfeta) \in \cH \} \subset \VVh.
\eeq

The Kolmogorov $N$-width \cite{pinkus12} of the solution manifold $\cM_\delta$ is 
\begin{equation}
    d(N; \cMh) = 
        \inf_{\substack{\VVrb \subset \VV \\ \dim(\VVrb) = N}} 
        \sup_{\uh \in \cMh} \inf_{v \in \VVrb} 
        \Norm{\uh - v}{\VV}\,,
    \label{eq:nwidth}
\end{equation}
and gives the best possible error of approximating all functions in $\cMh$ in a
subspace $\VVrb \subset \VV$ of dimension $N = \dim \VVrb$. Traditional
model reduction aims to construct a basis $\{\zeta_n\}_{n=1}^N$ of a subspace
$\VVrb$ with which the solution in $\cMh$ can be approximated well. The reduced
approximation is a linear combination of the basis $\{\zeta_n\}_{n=1}^N$,
\begin{equation}
    \urb(x,t;\bmu) = \sum_{n=1}^N \beta_n(\bfeta) \zeta_n(x)\,,
    \label{eq:rb_repr}
\end{equation}
with coefficients $\{\beta_n(\bfeta)\}_{n = 1}^{N}$; see, e.g., \cite{crb-book,siamrev-survey} and references therein.

For problems of hyperbolic or transport-dominated types, the Kolmogorov
$N$-width of $\cMh$ for the problem \cref{eq:prob} can decay slowly
\cite{Ohlberger13,Greif19}, which implies that traditional model reduction
fails: to obtain an acceptable accuracy with a reduced approximation of the
form \cref{eq:rb_repr}, the number of terms $N$, and thus the dimension of
$\VVrb$, must be large. Note that, in case the system \cref{eq:prob} is a
linear time-invariant system, the $N$-width is equivalent to the Hankel
singular values \cite{Unger19}.

\section{Transported subspaces}\label{sec:tsub}

To overcome the limitation of reduced approximations of the form \cref{eq:rb_repr}, we introduce approximations that adaptively vary the subspace $\VVrb$ by using a different subspace $\VVrb(\bfeta)$ depending on $(\bfeta) \in \cH$. This section proposes a specific structure for constructing these adaptive subspaces. It builds on what we will call finite-dimensional subsets of transport maps, which lead to adaptive subspaces of low dimension and, at the same time, guarantee efficient adaptations with respect to time and parameters.  

Furthermore, we provide an intuition as to why our construction is expected to produce an accurate approximation for solution manifolds of transport-dominated problems. We do so by providing a generalization of the notion of the Kolmogorov $N$-width \cref{eq:nwidth} which we call the $(N,M)$-width.  The new argument $M$ corresponds to the dimension of the finite-dimensional subset of transport maps. We give examples of transport-dominated problems for which the $(N,M)$-width decays rapidly and the Kolmogorov $N$-width decays slowly.

\subsection{Finite-dimensional subsets of transport maps} 
\label{sec:tmap}

We describe transport maps via diffeomorphisms over $\RR$. Throughout, we define a diffeomorphism as a piecewise continuously differentiable homeomorphism of $\RR$.

\begin{definition}
Let $\TT$ denote the subgroup of diffeomorphisms on $\RR$ that are increasing (strictly monotone).
\end{definition}

Function compositions (pullbacks) will play a key role. We will denote by $\flat$ the composition of $\xi: \RR \to \RR$ with the inverse of $T \in \TT$,
\beq
    T^\flat \xi(x) := \xi(T^{-1}(x)).
    \label{eq:pullback}
\eeq

Next we define the composition with the functions defined on the domain $\Dom$ by extending them to the real line. 

\begin{definition}\label{def:transport}
Let $T \in \TT$ and let $\xi: \Dom \to \RR$ be continuous.  Define $\bar{\xi}$ from $\xi$ by
\beq
\bar{\xi}(x) := \begin{cases} 
    \xi(x_\ell) 
    & 
    \text{ if } x \le x_\ell,
    \\
    \xi(x)      
    & 
    \text{ if } x_\ell < x < x_r,
    \\
    \xi(x_r) & \text{ if } x_r \le x.
    \\               \end{cases}           \label{eq:extrap}
\eeq
Then, we define $T^\flat \xi: \Dom \to \RR$ by $T^\flat \xi (x) := T^\flat \bar{\xi}(x)$.
\end{definition}

That the transport maps $T \in \TT$ are defined from $\RR$ to $\RR$ is 
different from previous works employing spatial transformations
\cite{Cagniart2019,taddei19}.

Let us construct a map $\hat{T}: \RR \to \RR$ by taking a linear combination of
given linearly independent continuous piecewise $C^1$ maps $v_m: \RR \to
\RR$ ($m=1,...,M$),
\beq
\hat{T}(x) := \sum_{m=1}^M \alpha_m v_m(x)\,.
\label{eq:Tpb}
\eeq
In general it does not hold that $\hat{T} \in \TT$ even if $\{v_m\}_{m=1}^M
\subset \TT$, since $\TT$ is not a vector space. We will focus on $\hat{T}
\in \Span\{v_m\}_{m=1}^M$ that also satisfies $\hat{T} \in \TT$, so it will
be convenient to use the shorthand $\Span_{\TT} \{v_m\}_{m=1}^M := \Span
\{v_m\}_{m=1}^M \cap \TT$.

\begin{definition}\label{def:TTrb}
We call a non-empty set $\bbS$ a \emph{finite-dimensional subset of
transport maps in $\TT$}, with the notation $\bbS \sqsubset \TT$, if
\begin{enumerate}[label=(\roman*)]
    \item $\bbS$ is set of a single member of $\TT$, when we say $\bbS$
has dimension $0$ ($\dim(\bbS) = 0$),
    \item $\bbS = \Span_\TT \{v_m\}_{m=1}^M$ with linearly independent
continuous piecewise linear maps $v_m: \RR \to \RR$ ($m=1, ..., M$), in which
case we say $\bbS$ has dimension $M$ ($\dim(\bbS) := M$) and refer to
$\{v_m\}_{m=1}^M$ as a \emph{basis} of $\bbS$.
\end{enumerate}
\end{definition}

Note that by construction, we require that $\hat{T} \in \bbS \sqsubset \TT$ be
increasing, i.e. $x_1 < x_2$ implies $\hat{T}(x_1) < \hat{T}(x_2)$. But more
generally, $\hT \in \Span \{v_m\}_{m=1}^M$ with arbitrary coefficients
$\{\alpha_m\}_{m=1}^M$ in \cref{eq:Tpb} is not necessarily increasing. The
consequence is that the inverse of $\hT$ becomes undefined. A sufficient
condition to satisfy $\hT \in \TT$ is $\hT' > 0$, which results in a set of
constraints on its coefficients $\{\alpha_m\}_{m=1}^M$. The definition of
$\dim(\bbS)$ agrees with the Lebesgue covering dimension of $\bbS$
\cite{HW41}.

\subsection{The Kolmogorov $(N,M)$-width}

We consider the nonlinear generalization of the reduced approximation \cref{eq:rb_repr} in which the basis functions $\{\zeta_n\}_{n=1}^N$ of $\VVrb$ are composed with the inverse of $T \in \TT$. That is, we consider approximating elements of the solution manifold $\cMh$ by functions lying in the set 
\begin{equation}
    \cV(\VVrb,\TT) 
    := 
    \bigcup_{T \in \TT} \VVrb(T),
    \quad
    \VVrb(T) 
    :=
    \{ T^\flat \xi :  \xi \in \VVrb \}.
    \label{eq:curlyV}
\end{equation}
In particular, we can restrict $\cV(\VVrb,\cdot)$ to use transport maps in $\TTrb \sqsubset \TT$ (\cref{def:TTrb}), resulting in the set $\cV(\VVrb,\TTrb)$. This leads to a generalization of the notion of the Kolmogorov $N$-width \cref{eq:nwidth}.
\begin{definition} \label{def:nmwidth}
    Define the \emph{Kolmogorov $(N,M)$-width} as
    \beq
    d(N,M; \cMh) =
                \inf_{\substack{\VVrb \subset \VV \\ \dim(\VVrb) = N \\ 
                \TTrb \sqsubset \TT \\ \dim(\TTrb) = M}} 
                \sup_{\uh \in \cMh} 
                \inf_{v\in \cV(\VVrb,\TTrb)}
                \Norm{\uh -  v}{\VV}.
    \label{eq:nmwidth}
    \eeq
\end{definition}
It is immediate that
\begin{equation}
d(N;\cM_\delta) = d(N,0;\cM_\delta), 
\quad
d(N;\cM_\delta) \ge d(N,M;\cM_\delta) \text{ for all } M \in \NN.
\end{equation}
Intuitively speaking, the low-rank basis $\{\zeta_n\}_{n=1}^N$ of $\VVrb$ spans the \emph{local} linearization of the solution manifold, whereas the \emph{global} transport-dominated behavior of the solution is approximated by the basis $\{v_m\}_{m=1}^M$ of $\TTrb$. This overall strategy is illustrated as a diagram in \cref{fig:diagram-param}. 

There are various nonlinear widths proposed in the past. In comparison,
the nonlinear approximation class with which the $(N,M)$-width is measured is
significantly more specialized, and yields bounds that are more specific to the
solution manifolds of transport-dominated PDEs like \cref{eq:prob}. For
instance, whereas the $(N,M)$-width is the smallest achievable approximation
error using $\cV(\VVrb,\TTrb)$, a more general class is used in the definition
of the nonlinear $n$-width in \cite{DeVore89, DeVore93, Cohen20}. Arbitrary
continuous mappings of the solution manifold to and from $\RR^n$ is considered.
An approximation in $\cV(\VVrb,\TTrb)$ can be identified to a point in
$\RR^{M+N}$, and with additional continuity restriction, any lower bound for
the nonlinear $n$-width, e.g. for the unit ball in Besov spaces
\cite{DeVore89}, would imply a bound for the restricted version of the
$(N,M)$-width. Conversely, the constructive approximations in
$\cV(\VVrb,\TTrb)$ imply an upper bound for the nonlinear $n$-width.

The $(N,M)$-width is also related to the nonlinear Kolmogorov width
of a similar name introduced in \cite{Temlyakov98}, where multiple
subspaces with identical dimension are used in the approximation of $\cMh$.
However, the nonlinear Kolmogorov width of $\cV(\VVrb,\TTrb)$ can be large in
general, even when the dimensions of $\VVrb$ and $\TTrb$ are both small (e.g.
consider $\VVrb = \Span \{\zeta_1\}$ with compactly supported $\zeta_1 \in
\VV$ and $\TTrb = \Span_\TT \{1, \Id\}$).

\begin{figure}
\centering
\includegraphics[width=0.5\textwidth]{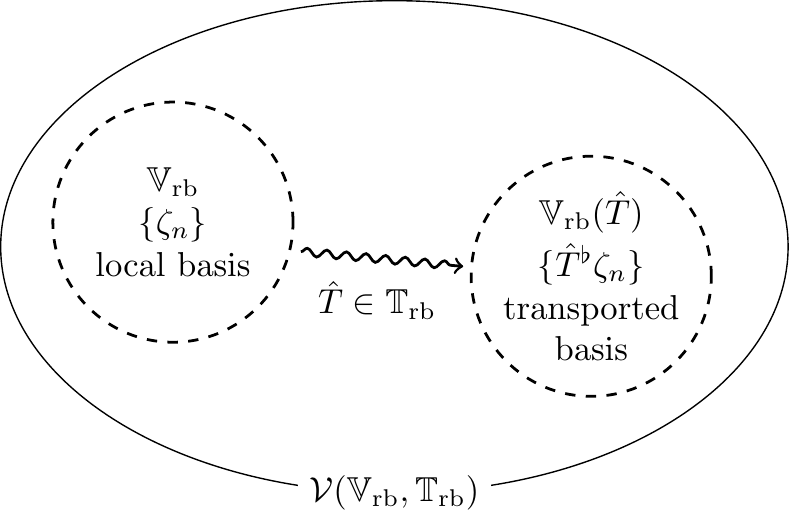}
\caption{Diagram depicting the local and global approximations using  $\cV(\VVrb,\TTrb)$, which is generated by transporting the subspace $\VVrb$ using $\hT \in \TTrb$.}
\label{fig:diagram-param}
\end{figure}

For systems of conservation laws, the scalar problem \cref{eq:prob} is extended into multiple state variables: the solution $u$ takes on vector values and the flux function $f(\cdot,x;\bmu)$ satisfies a generalized convexity condition \cite{fvmbook}. In this case, each state variable is a superposition of time-dependent characteristic variables, each with a corresponding set of characteristic curves. As a result, the approximations in $\cV(\VVrb,\TTrb)$, as well as the corresponding notion of $(N,M)$-width, is no longer sufficient. But an extension is possible by a superposition.

\begin{definition} 
    \emph{The Kolmogorov $(N,M,L)$-width} is given by
    \beq
    d(N,M,L; \cMh) =
      \inf_{\substack{
      \VVrb \subset \VV \\ \dim(\VVrb) = N \\ 
      \TTrb \sqsubset \TT \\ \dim(\TTrb) = M}} 
      \sup_{\uh \in \cMh} 
      \inf_{v_\ell \in \cV(\VVrb,\TTrb)}
      \Norm{\uh - \sum_{\ell=1}^L v_\ell}{\VV}.
    \label{eq:nmlwidth}
    \eeq
\end{definition} 
    
Again, $d(N,M,1;\cM_\delta) = d(N,M;\cM_\delta)$, $d(N,M;\cM_\delta) \ge
d(N,M,L;\cM_\delta)$. Various extensions to multiple spatial dimensions are
possible, for example by letting the transport map to be a diffeomorphism in
higher dimensions \cite{iollo14,taddei19,nonino19}. For hyperbolic problems in
particular, one can use the Radon transform \cite{rim18a,rim18c}.

\subsection{Decay of Kolmogorov $(N,M)$-width}

For intuition, we briefly discuss the decay of the $(N,M)$-width with examples.

\begin{example}[Linear advection] \label{expl:advection}
Consider a 1-parameter linear advection problem $f(u;\mu_1) = \mu_1 u, \psi =
0$ and $u_0(x) = \phi(x)$, for which $(\bfeta) = (t,\mu_1)$. The solution
manifold has a small $(N,M)$-width, which can be seen by letting $\VVrb :=
\Span\{\phi(x)\}$ and $\TTrb = \Span_\TT \{1,\Id(x)\}$. Then $d(1,2;\cM_\delta) = \mathcal{O}(\delta^r)$, achieving an error at the level of the full-model discretization error \cref{eq:fm_error}.
\end{example}

\begin{example}[Gibbs phenomenon] 
If we further restrict $\VVrb$ in \cref{eq:nwidth} and \cref{eq:nmwidth}
to be a subset of the Chebyshev basis of dimension $N$ and define the corresponding $N$- and $(N,M)$-width as $\tilde{d}(N;\cMh)$ and $\tilde{d}(N,M;\cMh)$, it is known that for a solution manifold containing the signum function $\sign: [-1,1] \to \{-1,0,1\}$ with a jump at $x = 0$, it holds that $\tilde{d}(N;\cM_\delta) \ge c/N$ for some $c > 0$; see \cite{atap-book}. However, allowing the diffeomorphism $\TTrb = \{x^{2q+1}\}$ with large enough $q \in \NN$ to transform $\VVrb = \{x\}$ has the consequence that $\tilde{d}(1,1;\cM_\delta) = \cO(\delta)$.
\end{example}

\begin{example}[Burgers' turbulence] \label{expl:burgers_random}
One can show that a continuous piecewise linear discretization of white noise,
in the form of \cref{eq:fom_repr} with $\{\varphi_n\}$ nodal basis functions
and independent coefficients $\{b_n\}$ drawn from normal distributions,
does not have a small $(N,M)$-width for small $N$ and $M$. Considering the
solution of the Burgers' equation with such a random initial condition, it can
be shown that the solution manifold for this problem has a large $(N,M)$-width
during a time interval $[0,\eps)$ for which $\eps$ is small enough.  
\end{example}

\begin{example}[Wave equation] \label{expl:wave}
The wave equation $\partial_{tt} u + \partial_{xx} u = 0$ can be written in
a first order system of the type \cref{eq:prob} with two state variables (see,
e.g.  \cite{fvmbook}), and thus can propagate waves in two opposite
directions. As discussed above, in this case we expect that
$d(N,2;\cM_\delta)$ will decay slowly with respect to $N$ since one
transport map cannot well-represent in a low-rank manner a superposition of
two waves traveling at different speeds in general. However, for zero
initial velocity and radiation boundary conditions, one can show
$d(1,2,2;\cM_\delta) = \cO(\delta^r)$. 
\end{example}

Additional details regarding \cref{expl:burgers_random,expl:wave} can be
found in \cref{sec:details}.

\subsection{Reduced nonlinear approximation}

A fast decay in the $(N,M)$-width is only a non-constructive statement. In later sections, we propose a constructive method that finds an approximation in $\cV(\VVrb,\TTrb)$. Given  $\uh(\cdot,t;\bmu) \in \cMh$, the method provides an algorithm for computing the reduced approximation $\hat{u}(\cdot,t;\bmu) \in \cV(\VVrb,\TTrb)$. Then, for each fixed $(t,\bmu) \in \cH$, $\hat{u}$ can be expressed as
\begin{equation}
    \left\{
    \begin{aligned}
    \hat{u}(x,t;\bmu) &= 
    \sum_{n=1}^N \beta_n(t,\bmu) \hTpb^\flat \zeta_n(x),
    \\
    \hTpb(x) &= \sum_{m=1}^M \alpha_m(t,\bmu) v_m(x).
    \end{aligned}
    \right.
    \label{eq:repr-Tpb}
\end{equation}
The reduced approximation $\hat{u}(\cdot,t;\bmu)$ is described by two sets of coefficients: the transport coefficients $\{\alpha_m(\bfeta)\}_{m=1}^M$ and the local basis coefficients $\{\beta_n(\bfeta)\}_{n=1}^N$. The reduced approximation  \cref{eq:repr-Tpb} relies on a different reduced subspace $\VVrb(\hTpb)$ for different $(\bfeta) \in \cH$. We call this subspace corresponding to $(\bfeta)$ a \emph{transported subspace} as described in the following definition.
\begin{definition} \label{def:tsubsp}
Let us denote by $\hTpb \in \TTrb \sqsubset \TT$ a transport map corresponding to $(\bfeta) \in \cH$. The \emph{transported subspace} is a space generated by compositions of functions in the space $\VVrb$ with the inverse of $\hTpb$,
\begin{equation}
    \VVrb(\bfeta)
    :=
    \VVrb(\hTpb)
    = 
    \{\hTpb^\flat \xi: \xi \in \VVrb\},
    \quad
    (\bfeta) \in \cH.
\end{equation}
\end{definition}
    
Note that $\VVrb(\bfeta) \not \subset \VVh$ in general, so the transported subspaces are not necessarily conforming with respect to the full-model space $\VVh$.

\section{Offline construction of low-rank subset of transport maps} \label{sec:offline}

In this section, we discuss a procedure that constructs the bases
$\{v_m\}_{m=1}^M$ of $\TTrb$ and $\{\zeta_n\}_{n=1}^N$ of $\VVrb$ to be used in
the nonlinear reduced approximation \cref{eq:repr-Tpb}. A key feature of our
procedure is that it relies only on well-understood linear dimensionality
reduction techniques such as the singular value decomposition (SVD), rather
than requiring the solution of special, potentially non-convex, optimization
problems. That is, while it is not the only possible procedure to compute
the transport modes, it is direct and well-posed whenever applicable.

We first recall definitions related to a nonlinear interpolation procedure called displacement interpolation by pieces (DIP) introduced in \cite{rim18b,rim19b}. The interpolation proceeds by decomposing $\uh(\cdot, t;\bmu)$ into a sum of monotone functions, then computing the optimal transport map between the corresponding monotone functions. One combines these maps to obtain the DIP maps, a set of transport maps in $\TT$. By constructing a low-rank approximation to the DIP maps, we obtain the basis $\{v_m\}_{m=1}^M$ to be used in the nonlinear reduced approximation \cref{eq:repr-Tpb}. We then discuss the construction of the basis $\{\zeta_n\}_{n=1}^N$.

\begin{remark} \label{rmk:full}
From here and onwards, we will assume that the full model is first order accurate ($r=1$ in \cref{eq:fm_error}), since a finite volume method (FVM) with Godunov flux is total variation diminishing (TVD) without requiring additional modifications, e.g. limiters or non-oscillatory higher-order reconstructions. As a result, the full model as well as the corresponding estimate \cref{eq:fm_error} extends to the non-classical solutions of \cref{eq:prob} without changes (see \cite{fvmbook}). Therefore, this choice simplifies our exposition without limiting the extensibility to the non-classical case. We take as $\uh$ the continuous piecewise-linear reconstruction of such a FVM solution in the form \cref{eq:fom_repr}. The constructive methods proposed in this and subsequent sections can be extended to the higher-order case ($r > 1$) with more work. 
\end{remark}

\begin{figure}
\centering
\begin{tabular}{cc}
\includegraphics[width=0.8\textwidth]{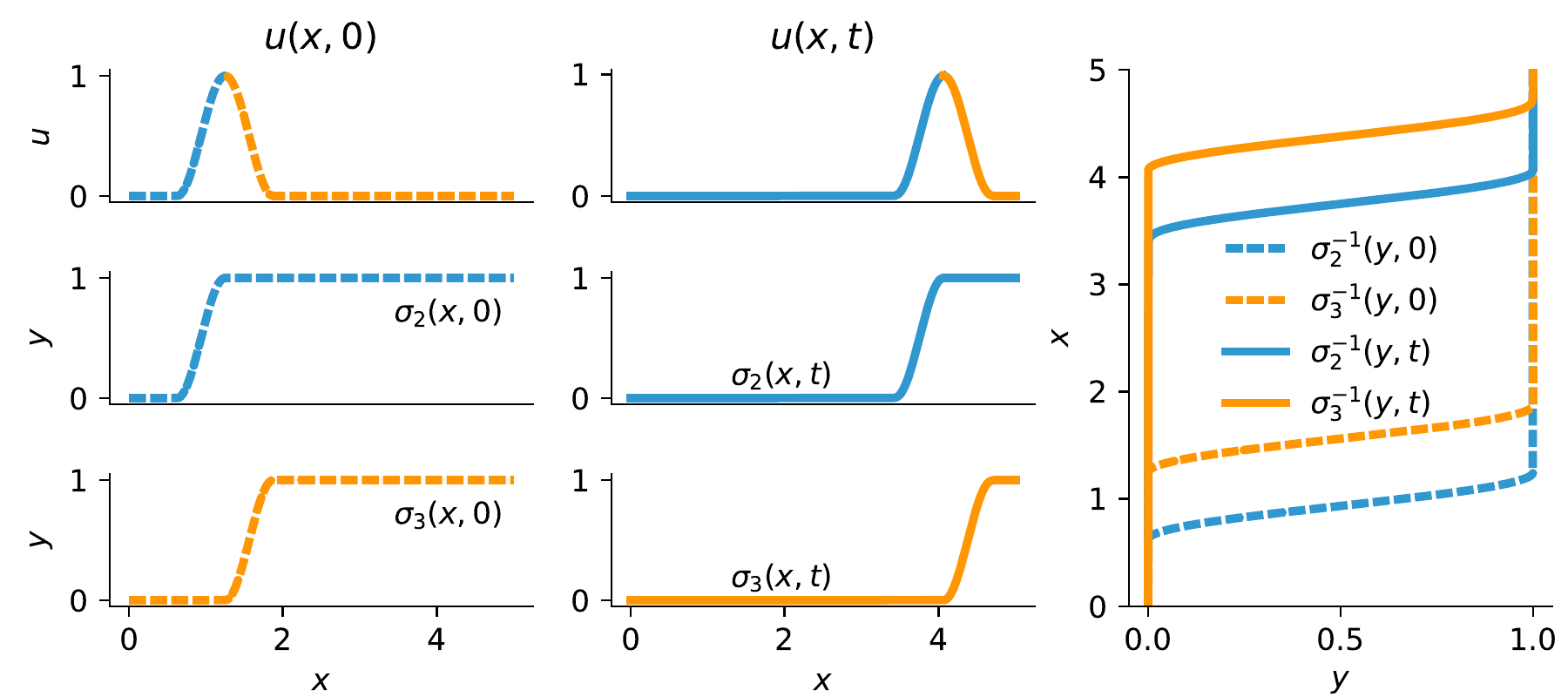}
\end{tabular}
\caption{
The monotone decomposition \cref{eq:decomp} for the solution $u(x,t)$ to the
advection equation for two different times (left) with $\gamma_2 =1, \gamma_3 =
-1$. The inverse of $\sigma_2,\sigma_3$ (right). For additional details, see
\cref{appdx:expl_advec}.
}
\label{fig:advection_sig}
\end{figure}

\subsection{Monotone decomposition} \label{sec:decomp}
In order to define DIP, we first define the \emph{monotone decomposition} which decomposes the solution into a sum of monotone functions. We start by making a preliminary statement that the continuous piecewise-linear $\uh(\cdot,t;\bmu)$ can be decomposed as a sum of certain non-decreasing functions. 

\begin{lemma} 
\label{lem:decomp}
One can represent $\uh(\cdot,t;\bmu) \in \cMh$ \cref{eq:fom_repr} as a sum
\begin{equation}
  \uh(x,t;\bmu) = \uh(x_*,t;\bmu) + \sum_{j=1}^{J} \gamma_j(t,\bmu) \sigma_j(x,t;\bmu),
    \label{eq:decomp}
\end{equation}
where $x_* \in \Dom$ is a fixed point, $\gamma_j: (0,\tfin) \times \cD \to
\RR$, and for each $(t,\bmu) \in \cD$ the functions $\sigma_j =
\sigma_j(\cdot,t;\bmu): \Dom \to [0,1]$ satisfy:
\begin{enumerate}[label=(\roman*)]
\item $\sigma_j$ is continuous, piecewise linear, and non-decreasing,
\item $\sigma_j(x_\ell) = 0$ and $\sigma_j(x_r) = 1$,
\item strictly increasing in the preimage $\sigma_j^{-1}((0,1))$,  
\item $\setint(\supp \sigma_j') \cap \setint(\supp \sigma'_k) = \emptyset$ if $j \ne k$ where $\setint(\cdot)$ is the interior of a set, 
\item $j < k$ implies $\sup \supp \sigma_j' \le \inf \supp \sigma_k'$,
\item $\Dom = \cup_{j=1}^J \supp \sigma_j'$.
\end{enumerate}
\end{lemma}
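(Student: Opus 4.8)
\emph{Approach.} I would read the decomposition straight off the graph of the continuous piecewise-linear profile $\uh(\cdot,t;\bmu)$. Fix $(t,\bmu)\in\cH$. Since $\uh(\cdot,t;\bmu)\in\VVh$ has a constant slope on every grid cell, group the cells into maximal blocks on which the slope has a single sign (all positive, all negative, or all zero); enumerate the nodes at which the slope sign changes, together with $x_\ell$ and $x_r$, as $x_\ell=a_0<a_1<\dots<a_J=x_r$. On each $[a_{j-1},a_j]$ the profile $\uh(\cdot,t;\bmu)$ is then strictly increasing, strictly decreasing, or constant.

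\emph{Construction.} Put $\gamma_j(t,\bmu):=\uh(a_j,t;\bmu)-\uh(a_{j-1},t;\bmu)$, positive, negative, or zero according to the type of block $j$. On a strictly monotone block let $\sigma_j(\cdot,t;\bmu)$ restricted to $[a_{j-1},a_j]$ be the affine rescaling $(\uh(\cdot,t;\bmu)-\uh(a_{j-1},t;\bmu))/\gamma_j(t,\bmu)$, which sends $a_{j-1}\mapsto0$ and $a_j\mapsto1$; on a constant block, where $\gamma_j=0$ and that rescaling is undefined, instead let $\sigma_j(\cdot,t;\bmu)$ restricted to $[a_{j-1},a_j]$ be any fixed increasing affine map from $0$ to $1$. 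In both cases extend $\sigma_j(\cdot,t;\bmu)$ by the constant $0$ to the left of $a_{j-1}$ and by the constant $1$ to the right of $a_j$.

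\emph{Verification.} Properties (i)--(iii) are immediate: each $\sigma_j(\cdot,t;\bmu)$ is continuous, piecewise linear, non-decreasing, equals $0$ at $x_\ell$ and $1$ at $x_r$, and is strictly increasing on $\sigma_j^{-1}((0,1))=(a_{j-1},a_j)$, so that $\supp\sigma_j'=[a_{j-1},a_j]$. Properties (iv)--(vi) follow because the intervals $[a_{j-1},a_j]$ are consecutive, meet only at their endpoints, and cover $[x_\ell,x_r]\supseteq\Dom$. For the identity \cref{eq:decomp} take $x_*:=x_\ell$: when $x$ lies in the $k$-th block one has $\sigma_j(x,t;\bmu)=1$ for $j<k$ and $\sigma_j(x,t;\bmu)=0$ for $j>k$, so $\uh(x_\ell,t;\bmu)+\sum_{j=1}^{J}\gamma_j(t,\bmu)\sigma_j(x,t;\bmu)$ telescopes to $\uh(a_{k-1},t;\bmu)+\gamma_k(t,\bmu)\sigma_k(x,t;\bmu)$, which equals $\uh(x,t;\bmu)$ (the last step being trivial on a constant block, where $\uh(x,t;\bmu)=\uh(a_{k-1},t;\bmu)$). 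Since the inflow condition in \cref{eq:prob} makes $\uh(x_\ell,t;\bmu)=u_0(x_\ell)$ independent of $(t,\bmu)$, this base point is admissible.

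\emph{Main obstacle.} The lemma is elementary, and the only point that genuinely needs care is arranging (iv)--(vi) simultaneously in the presence of flat stretches of $\uh(\cdot,t;\bmu)$: such a stretch lies in no monotone block, so it must be given its own index $j$, and then \cref{eq:decomp} forces $\gamma_j=0$ -- this is precisely why the construction splits into the monotone and constant cases and uses an artificial $\sigma_j$ in the latter. A cosmetic point is that the base point should be the pinned inflow node $x_\ell$ rather than an interior point of $\Dom$, since every $\sigma_j$ vanishes at $x_\ell$ and no interior choice is consistent with the stated form of \cref{eq:decomp}. If a single count $J$ independent of $(t,\bmu)$ is wanted, one can pad by subdividing a strictly monotone block at interior grid nodes into shorter strictly monotone blocks, which preserves (i)--(vi); I do not expect this to require a new idea.
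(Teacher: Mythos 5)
Your proof is correct, but it takes a different (and in some ways more ambitious) route than the paper's. The paper's proof is a one-line Abel summation at the level of individual grid cells: writing $\uh$ in the nodal basis \cref{eq:fom_repr}, it sets $\varsigma_n:=\sum_{m=n}^{N_\delta}\varphi_m$ and telescopes to get $\sigma_j=\varsigma_{j+1}$, $\gamma_j=b_{j+1}-b_j$, $J=N_\delta-1$. This buys two things for free: the $\sigma_j$ are \emph{universal} ramps independent of $(t,\bmu)$ (all the time--parameter dependence sits in the $\gamma_j$, consistent with the lemma's phrasing $\gamma_j:(0,\tfin)\times\cD\to\RR$ with a single $J$), and there is no case analysis --- a flat cell simply gets $\gamma_j=0$ with the same ramp $\varsigma_{j+1}$. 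Your construction instead groups cells into maximal monotone blocks and rescales $\uh$ itself on each block, which essentially builds the \emph{minimal} monotone decomposition of \cref{def:decomp} directly rather than the mere existence statement of the lemma; the price is exactly the two issues you flagged: the flat blocks need an artificial strictly increasing $\sigma_j$ with $\gamma_j=0$ (your fix is correct and is indeed forced by (iii) and (vi)), and $J$ becomes $(t,\bmu)$-dependent, requiring the padding remark (subdividing down to single cells recovers the paper's $J=N_\delta-1$). Both proofs share the same telescoping identity and the same (harmless) abuse that $x_*=x_\ell$ and the endpoint values lie on the boundary of the open domain $\Dom$; your verification of (i)--(vi) and of \cref{eq:decomp} is complete and correct.
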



\begin{proof}
Recall the expression \cref{eq:fom_repr} for $\uh(x,t;\bmu)$, where we let $\varphi_n(x)$ be the hat function, i.e. continuous piecewise linear function such that $\varphi_n(x_\ell + (m-1)\Delta x) = \delta_{n,m}$ where $\delta_{n,m}$ is the Kronecker delta and $n,m = 1, ...,N_\delta$. Now, let $\varsigma_n := \sum_{m = n}^{N_\delta} \varphi_{m}$ then
\beq
\uh(x,t;\bmu) 
= 
\sum_{n=1}^{N_\delta} \beta_n(t,\bmu) \varphi_n(x)
= \beta_1(t,\bmu) \varsigma_1(x)
+
\sum_{n=2}^{N_\delta} (\beta_n(t,\bmu) - \beta_{n-1}(t,\bmu)) \varsigma_n(x),
\eeq
so that $x_* := x_\ell$, $J := N_\delta -1$, $\sigma_j := \varsigma_{j+1}$,  $\gamma_j := \beta_{j+1} - \beta_j$ gives,
\beq
u_\delta(x,t;\bmu)
=
u_\delta(x_*,t;\bmu)
+ 
\sum_{j=1}^J \gamma_j(t,\bmu) \sigma_j(x).
\eeq
Note that $\sigma_j$ is continuous, piecewise linear, non-decreasing function,  $\sigma_j(x_\ell) = 0$, $\sigma_j(x_r) = 1$, and $\supp \sigma'_j = [x_\ell + (j-1) \Delta x, x_\ell + j \Delta x]$ with $\sigma_j^{-1}((0,1)) = \setint(\supp \sigma'_j)$ in which it is strictly increasing with $\sigma'_j = 1/\Delta x$.
\end{proof}

\begin{definition} \label{def:decomp}
A \emph{monotone decomposition} of $\uh(\cdot,t;\bmu) \in \cMh$ is the sum \cref{eq:decomp} with the minimal $J \in \NN$.
\end{definition}

For an illustration of the monotone decomposition of solutions to the advection equation, see \cref{fig:advection_sig}. Since $u_\delta$ is continuous and piecewise linear in our setting, an exact decomposition \cref{eq:decomp} is available. In more general settings, the identity in \cref{eq:decomp} can be relaxed to an approximation. 

Note that one can extrapolate $\sigma_j(\cdot,t;\bmu)$ defined in the bounded domain $\Dom = (x_\ell, x_r)$ to the real line $\RR$, by letting  $\sigma_j(x,t;\bmu)$ be $0$ if $x \le x_\ell$ and $1$ if $x \ge x_r$. This extends $\uh(\cdot;\bfeta)$ also to the real line (equivalent to \cref{eq:extrap}). Also note that since the full model is assumed to be TVD, the corresponding $u_\delta$ for the homogeneous version of \cref{eq:prob} has $J$ decreasing over time.

\subsection{Signature condition and DIP maps} \label{sec:sgn}

We introduce the signature condition which is relevant when comparing the decomposition \cref{eq:decomp} for $\uh(\cdot,t;\bmu) \in \cMh$ across different values of $(\bfeta) \in \cH$. Let us collect snapshots of $u_\delta(\cdot,t;\bmu)$ at various values of $(\bfeta) \in \cHg$ designated by the set
\beq
\Htrain := \{(\bfetai{1}), (\bfetai{2}), ..., (\bfetai{\Ntrain})\}.
\label{eq:cHg}
\eeq
Then, we denote the set of snapshots of $\uh$ by
\beq
    \cSg := \{ \uh(\cdot,t;\bmu) : (\bfeta) \in \cHg \}.
    \label{eq:T_snapshot}
\eeq
Now, we define the signature condition on $\cSg$.

\begin{definition}[Signature condition]
\label{def:sgn}
A set of snapshots $\cSg$ \cref{eq:T_snapshot} satisfies the
\emph{signature condition} if each snapshot in the set has a monotone
decomposition (\cref{def:decomp}) that holds with the same $x_* \in \Dom$ and
and the same vector $(\sign(\gamma_j))_{j=1}^J$.
\end{definition}

If $\cSg$ satisfies the signature condition, each snapshot $\uh \in \cSg$ can be expanded by the decomposition, 
\beq
\uh(x,t_\ell;\bmu_\ell)
= 
\uh(x_*,t_\ell;\bmu_\ell) 
+ 
\sum_{j=1}^J \gamma_j(t_\ell,\bmu_\ell) \sigma_j(x,t_\ell;\bmu_\ell),
\quad
\ell = 1, ..., \Ng,
\label{eq:sgn_decomp}
\eeq
for $x_*$ and $J$ independent of $\ell$ and $\sign( \gamma_j(t_\ell,\bmu_\ell))
= \sign(\gamma_j(t_{\ell'},\bmu_{\ell'}))$ for $\ell, \ell' = 1, \dots, \Ng$.
Then, for a fixed $j \in \{1, ..., J\}$, every pair
$\sigma_j(\cdot,t_\ell;\bmu_\ell),\sigma_j(\cdot,t_{\ell'};\bmu_{\ell'})$
yields a specific transport map that is the explicit solution to the
Monge-Kantorovich optimal transport problem, called the \emph{monotone
rearragement} \cite{villani03,villani2008}. 

We will combine these maps for $j=1,...,J$ to produce DIP maps. Let us
denote $\sigma_j(x) := \sigma_j(x,t_1;\bmu_1)$. Then the rearrangement map
from $(\bfetai{1})$ to $(\bfetai{\ell})$ is the map $R_{j,\ell}(x): \supp
\sigma_j' \to \RR$ given by
\beq
R_{j,\ell}(x) := \sigma_j^{-1}(\sigma_j(x),t_\ell;\bmu_\ell),
    \label{eq:Rjl}
\eeq
where $j$ is the index in the monotone decomposition \cref{eq:sgn_decomp}. Note that $R_{j,\ell}$ is continuous and piecewise linear: since $\sigma_j^{-1}(\cdot,t;\bmu)$ is continuous and piecewise linear, $R_{j,\ell}$ is a composition of two such maps. So far, the map is only defined in a part of the domain, so we combine these maps for all $j=1, ..., J$. Let $\Dom_0 := \cup_{j=1}^J \setint(\supp \sigma'_j)$, then let $R_\ell(x): \Dom_0  \to \RR$ be defined by
\beq
    R_\ell(x) := R_{\mathfrak{j}(x),\ell}(x) 
    \label{eq:Rl}
\eeq
where $\mathfrak{j}(x)$ is $\mathfrak{j} \in \{1, ..., J\}$ such that
$x \in \setint(\supp \sigma'_j)$.  The DIP map is the extension of $R_{\ell}$
on the entire real line as defined in the following.

\begin{definition} \label{def:dip_Tl}
The \emph{DIP map} $T_\ell$, $\ell = 1, .., \Ng,$ is the continuous piecewise
linear extension of $R_\ell$ given in \cref{eq:Rl} to the real line $\RR$ by
letting $R_\ell$ be linear outside $\Dom$ and
\begin{enumerate}[label=\emph{(\arabic*)}]
\item continuation at $\setint(\Dom) \setminus \Dom_0$,
\item extrapolation at the boundaries by $T_\ell'(x) :=  R_\ell'(x)$ for $x \in \{x_\ell,x_r\}$.
\end{enumerate}
\end{definition}

As discussed in \cite{rim19b}, it can be shown that the DIP map approximates the map generated by the characteristic curves of the scalar conservation law \cref{eq:prob}. A consequence is that DIP between the classical solutions to the homogeneous version of \cref{eq:prob} at two different times yields the solution at an intermediate time.

\subsection{Low-rank subset of transport maps}\label{sec:lr-tmodes}

The DIP maps $\{T_\ell\}_{\ell=1}^\Ng$ can be low-rank, although the set of functions generated by the composition of the DIP map $\{T_\ell^\flat \xi: \xi \in \VVrb\}_{\ell=1}^\Ng$ is high-rank in general (see \cref{fig:advection_sig}). To exploit this low-rank structure, we proceed to compute the low-rank approximation to $\{T_\ell\}_{\ell=1}^\Ng$.

Collecting the DIP maps $\{T_\ell\}_{\ell=1}^\Ng$
one can use standard tools, e.g. SVD, to compute a low-rank structure. Let $\bfT:\RR \to \RR^\Ng$ with $\bfT := [T_1, T_2, ..., T_\Ng]$ then 
\beq
    v_m(x) = \bfT(x) \cdot \bv_m,
    \label{eq:tmodes}
\eeq
where $\cdot$ is the inner product between two vectors in $\RR^\Ng$, and $\bv_m \in \RR^\Ng$, is the $m$-th eigenvector for the matrix $\bfC \in \RR^{\Ng \times \Ng}$ given by the inner products
\beq
   (\bfC)_{\ell,\ell'} :=  (T_\ell,T_{\ell'}), 
    \quad \ell,\ell' = 1, ..., \Ng.
    \label{eq:Cll}
\eeq
One obtains $\{v_m\}_{m=1}^M$ if one truncates the eigenfunction expansion after the first $M$ terms corresponding to the largest eigenvalues. 
    
Note that $T_1 = \Id$ where $\Id$ denotes the identity, and that each map
$T_\ell$ contains a component of the identity map, i.e. $(T_\ell,\Id) \ne 0$.
So it is convenient to let $v_1 := \Id$ then compute the correlation matrix
of the perturbation from the identity. That is, pre-process $\bfC$ to
obtain $\bar{\bfC} \in \RR^{(\Ng-1) \times (\Ng-1)}$ 
\beq
(\bar{\bfC})_{\ell,\ell'} 
:= 
(T_{\ell+1} - \Id, T_{\ell'+1} - \Id) 
\quad \text{ for } 
\ell,\ell' = 1, ..., \Ng-1,
\eeq
then use $\bar{\bfC}$ in place of $\bfC$ to compute the corresponding basis $\{v_m\}_{m=2}^M$. We shall do so in our numerical experiments below. 

In what follows, we will denote by $\{v_m\}_{m=1}^M$ and $\TTrb \sqsubset \TT$ the basis and finite-dimensional subset constructed in this section.

\begin{definition}\label{def:lr-tmaps}
We will refer to the constructed basis $\{v_m(x)\}_{m=1}^M$ \cref{eq:tmodes} as \emph{transport modes}. We will denote by $\TTrb$ the resulting finite-dimensional subset of transport maps (\cref{def:TTrb}), and we refer to it as the \emph{low-rank subset of transport maps}.
\end{definition}

A detailed example of the DIP maps and the transport modes for the
advection equation (\cref{fig:advection_sig}) appears in
\cref{appdx:expl_advec}.

\subsection{Local basis}

We will briefly discuss how to construct the subspace $\VVrb$. Revisiting the diagram in \cref{fig:diagram-param}, we wish to obtain $\VVrb$ that approximates the manifold $\cMh$ locally. To this end, the set of snapshots we denote by $\cSl$ will be taken from the local manifold. We use the time-parameters $(\bfeta)$ near a small neighborhood of the point $(\bfetai{1}) \in \cHg$ \cref{eq:cHg} to obtain the snapshots, denoted by $\cHl$ 
\beq
    \cHl := \{(\bfetai{1}), ..., (\bfetai{\Nl})\}.
\eeq
For convenience, we construct $\VVrb$ by taking as snapshots
\beq
\begin{aligned}
u_{\delta, \ell} &:=
\uh(\cdot,t_\ell;\bmu_\ell),
\\
\partial_x u_{\delta, \ell} &:=
\partial_x \uh(\cdot,t_\ell;\bmu_\ell), 
\end{aligned}
\quad
\begin{aligned}
\partial_x f_\ell
&:=
\partial_x [f(\uh(\cdot,t_\ell;\bmu_\ell),\cdot\,;\bmu_\ell)],
\\
\psi_\ell
&:=
\psi(\uh(\cdot,t_\ell;\bmu_\ell),\cdot\,;\bmu_\ell),
\end{aligned}
\eeq
for each $(t_\ell,\bmu_\ell) \in \cHl$, then let
\beq
    \cSl := \left\{
        u_{\delta, \ell},
        \partial_x u_{\delta, \ell},
        \partial_x f_\ell,
        \psi_\ell
        : 
        \ell = 1, ... , \Nl 
    \right\}.
    \label{eq:sns_loc}
\eeq
The reduced space $\VVrb$ is spanned by the first $N$ left-singular vectors $\{\zeta_n\}_{n=1}^N$ of the snapshot matrix corresponding to $\cSl$. 

\section{Online reduced model}\label{sec:online}

In this section, we introduce methods that allow efficient online computations with the reduced nonlinear approximation \cref{eq:repr-Tpb}. Compared to the linear case \cref{eq:rb_repr}, additional considerations are necessary due to the nonlinear aspects of the approximation \cref{eq:repr-Tpb}.

First, we introduce moving interpolation points we call \emph{interpolation
particles}. The motivation is to take advantage of the form of the
approximation \cref{eq:repr-Tpb}, by making the interpolation points move
depending on $(t,\bmu) \in \cH$ via the transport maps $\hTpb \in \TTrb$
(\cref{def:lr-tmaps}). We will show that they enable an efficient change of
basis yielding a projection onto the transported subspaces $\VVrb(t,\bmu)$
(\cref{def:tsubsp}). The idea is applied in particular to the empirical
interpolation method \cite{barrault04,chanturantabut10}.

Then we introduce an online time-stepping for reduced models for the problem \cref{eq:prob}. It alternates between the update of the local basis coefficients $\{\beta_n(\bfeta)\}_{n=1}^N$ and the update of the transport mode coefficients $\{\alpha_m(\bfeta)\}_{m=1}^M$. The cost of time-stepping the nonlinear reduced model depends only on the reduced degree of freedoms $N$ and $M$, and does not depend on the size of the full model $N_\delta$ at each time-step.

\subsection{Interpolation particles} \label{sec:ipts}

To deal with nonlinear functions in our problem \cref{eq:prob} as well as function compositions in \cref{eq:repr-Tpb} in an efficient manner, we introduce a moving interpolation method that is compatible with empirical interpolation. In the traditional empirical interpolation, one uses the basis functions $\{\zeta_n\}_{n=1}^N$ of $\VVrb$ and the corresponding interpolation points $\cX := \{x_i\}_{i=1}^N \subset \Dom$.  We generalize the pair $\{\zeta_n\}_{n=1}^N$ and $\{x_i\}_{i=1}^N$ by allowing the interpolation to move according to the transport map $\hTpb \in \TTrb$. As a result, one obtains a set of pairs: the transported basis $\{\hTpb^\flat \zeta_n \}_{n=1}^N$ and the corresponding interpolation points $\hTpb(\cX)$. 

Hence, the transport evolves the interpolation points over time, endowing these points with momenta. So we will refer to these interpolation points $\hTpb(\cX)$ that move along the transport as \emph{interpolation particles}. 

\begin{definition}
Given a basis $\{\zeta_n\}_{n=1}^N$ that spans $\VVrb$ and its interpolation points $\cX =\{x_i\}_{i=1}^N \subset \Dom$, the \emph{interpolation particles} for the basis $\{\hTpb^\flat \zeta_n\}_{n=1}^N$ of $\VVrb(\bfeta)$ is defined as $\hTpb(\cX)$.
\end{definition}

Onwards, we will refer to the interpolation particles simply as particles. A useful property is that the empirical interpolation construction commutes with the transport operation. 

\begin{proposition}
Given a basis $\{\zeta_n\}_{n=1}^N$ and corresponding interpolation points $\cX =\{x_i\}_{i=1}^N \subset \Dom$, assume $\hTpb \in \TTrb \sqsubset \TT$ and that $\hTpb(\cX) \subset \Dom$. Then the particles $\hTpb(\cX)$ are the interpolation points of $\{\hTpb^\flat \zeta_n\}_{n=1}^N$.
\end{proposition}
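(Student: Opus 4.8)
The plan is to reduce the assertion to the invariance of the empirical interpolation collocation matrix under conjugation by $\hTpb$. Recall that, by the construction underlying empirical interpolation \cite{barrault04,chanturantabut10}, a set $\cX = \{x_i\}_{i=1}^N \subset \Dom$ qualifies as a set of interpolation points for a basis $\{\zeta_n\}_{n=1}^N$ precisely when the collocation matrix $B \in \RR^{N\times N}$ with entries $B_{in} := \zeta_n(x_i)$ is invertible (in the greedy EIM construction $B$ is in fact lower triangular with unit diagonal). So it suffices to show that the collocation matrix $\tilde{B}$ assembled from the transported basis $\{\hTpb^\flat \zeta_n\}_{n=1}^N$ of $\VVrb(\bfeta)$ and the particles $\hTpb(\cX) = \{\hTpb(x_i)\}_{i=1}^N$ coincides with $B$; invertibility then transfers, which is exactly the claim.

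First I would note that the hypothesis $\hTpb(\cX) \subset \Dom$ makes the collocation points admissible: each $\hTpb^\flat \zeta_n$ is, by \cref{def:transport}, a function on $\Dom$ defined through the boundary extension $\bar{\zeta}_n$ of \cref{eq:extrap}, hence it can be evaluated at every $\hTpb(x_i)$. Then I would compute, for each pair $(i,n)$,
\[
(\hTpb^\flat \zeta_n)\bigl(\hTpb(x_i)\bigr)
= \bar{\zeta}_n\bigl(\hTpb^{-1}(\hTpb(x_i))\bigr)
= \bar{\zeta}_n(x_i)
= \zeta_n(x_i),
\]
where the first equality is the definition of the pullback \cref{eq:pullback} applied to the extension, the second uses that $\hTpb \in \TT$ is a homeomorphism of $\RR$ so that $\hTpb^{-1}\circ\hTpb = \Id$, and the third uses $x_i \in \Dom$ together with \cref{eq:extrap}. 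Therefore $\tilde{B}_{in} = B_{in}$ for all $i,n$, i.e. $\tilde{B} = B$, which is invertible; hence $\hTpb(\cX)$ are interpolation points for $\{\hTpb^\flat \zeta_n\}_{n=1}^N$.

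I do not expect a serious obstacle: the only point that needs care is the bookkeeping around the boundary extension in \cref{def:transport}. One must be sure that $\bar{\zeta}_n$ is invoked only at points of $\Dom$ — so that $\bar{\zeta}_n(x_i) = \zeta_n(x_i)$ — and that $\hTpb^{-1}(\hTpb(x_i))$ genuinely returns to $x_i \in \Dom$ rather than to a point of the extrapolated region; both follow at once from $x_i \in \Dom$ and the injectivity of $\hTpb$ on $\RR$. I would also remark that the assumption $\hTpb(\cX) \subset \Dom$ is not merely cosmetic: without it the transported basis could not be evaluated at the particles inside $\Dom$ and the statement would be vacuous, whereas with it the transported collocation problem is literally the original one, written in transported coordinates.
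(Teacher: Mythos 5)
Your computation $(\hTpb^\flat \zeta_n)(\hTpb(x_i)) = \zeta_n(x_i)$ is correct and is indeed the opening step of the paper's argument, but the reduction you build on it does not prove the stated claim. You take ``$\cX$ is a set of interpolation points for $\{\zeta_n\}$'' to mean merely that the collocation matrix $B_{in}=\zeta_n(x_i)$ is invertible, and you conclude by transporting that invertibility. In the empirical interpolation method, however, the interpolation points are not characterized by unisolvence: they are the specific output of the greedy selection, $x_I = \argmax_x |\zeta_I(x) - \sum_{n<I}\vartheta_n\zeta_n(x)|$ with the $\vartheta_n$ determined by matching at the previously chosen points. Invertibility of the collocation matrix is necessary but far from sufficient — already for $N=1$, any point where $\zeta_1$ is nonzero gives an invertible $1\times 1$ collocation matrix, yet only the maximizer of $|\zeta_1|$ is the EIM point. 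So your argument shows that $\hTpb(\cX)$ is \emph{an admissible} collocation set for the transported basis, not that it is \emph{the} set of interpolation points that the EIM construction assigns to $\{\hTpb^\flat\zeta_n\}_{n=1}^N$, which is what the proposition asserts.

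To close the gap you need the additional observation that the greedy maximization commutes with the transport: since $\hTpb$ is an increasing homeomorphism, for each residual $r$ one has $\argmax_{x\in\Dom}|\hTpb^\flat r(x)| = \hTpb\bigl(\argmax_{x\in\hTpb^{-1}(\Dom)\cap\Dom}|r(x)|\bigr)$, and by your identity the interpolation coefficients $\vartheta_n$ computed at the transported points agree with those computed at the original points. An induction over the greedy steps then shows $\tau_I = \hTpb(x_I)$ for every $I$, which is exactly how the paper proceeds. (The hypothesis $\hTpb(\cX)\subset\Dom$ enters here to guarantee that the transported maximizers are not lost to, or displaced by, the boundary extension of \cref{def:transport} — a role it does not visibly play in your version of the argument.)
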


\begin{proof}
For simplicity, denote $\hat{T} := \hTpb$ for a fixed $(t,\bmu) \in \cH$ during this proof. For $x_i \in \cX$, 
\beq
\hT^\flat \zeta_n( \hT(x_i) ) = \zeta_n(\hT^{-1}(\hT(x_i))) =  \zeta_n (x_i)
\quad
\text{ for } i = 1,2, ..., N,
\eeq
which implies due to the construction of $\cX$ that
\beq
    \argmax_{x \in \Dom} \abs{\hT^\flat \zeta_n (x)} 
    =  
    \hT(\argmax_{x \in \hT(\Dom) \cap \Omega} \abs{\zeta_n (x)}).
\eeq
Then we show by induction that the interpolation points of the basis
$\{\hat{T}^\flat \zeta_n\}_{n=1}^N$ is equal to $\hat{T}(\cX)$. Let us denote by $\{\tau_1, ..., \tau_I\}$ the interpolation points corresponding to $\{\hT^\flat \zeta_1, ..., \hT^\flat \zeta_I\}$ for $I \le N$. When $I=1$,
\beq
\tau_1 = \argmax_{x \in \Dom} | \hT^\flat \zeta_1(x)|
= \hT(\argmax_{x \in \hT(\Dom) \cap \Omega} |\zeta_1(x)|)
= \hT(x_1).
\eeq
Suppose $\tau_i = \hat{T}(x_i)$ for $i = 1,2,...,I-1$. Then upon solving for $\{\vartheta_n\}_{n=1}^{I-1}$ in
\beq
    \sum_{n=1}^{I-1} \vartheta_n^{I-1} \hT^\flat \zeta_n(\tau_i)
    =
    \hT^\flat \zeta_I(\tau_i)
    \quad
    \text{ for } i = 1,2, ..., I-1,
    \label{eq:interp1}
\eeq
the next interpolation point $\tau_{I}$ is found by
\beq
\tau_I
=
\argmax_{x\in \Dom}
|\hT^\flat \zeta_I(x) 
- 
\sum_{n=1}^{I-1} \vartheta_n^{I-1} \hT^\flat \zeta_n(x)|.
\label{eq:interp2}
\eeq
Observe that \cref{eq:interp1} and \cref{eq:interp2} can be re-written in terms of the next interpolation point $x_I$ of $\{\zeta_n\}_{n=1}^{I}$. That is, since $\{\vartheta_n\}_{n=1}^{I-1}$ also satisfy,
\beq
    \sum_{n=1}^{I-1} \vartheta_n^{I-1} \zeta_n(x_i)
    =
    \zeta_I(x_i)
    \quad
    \text{ for } i = 1,2, ..., I-1,
\eeq
one may re-write \cref{eq:interp2} and obtain
\beq
\tau_I
=
\hT(
\argmax_{x\in \hT(\Dom)}
|\zeta_I(x) 
- 
\sum_{n=1}^{I-1} \vartheta_n^{I-1} \zeta_n(x)|)
= \hT(x_I).
\eeq
Therefore $\tau_i = \hT(x_i)$ for all $i = 1,2, ... ,N$.
\end{proof}

Moreover, it is efficient to compute the derivatives of $\xi \in \VVrb(t,\bmu)$ at the particle locations $\hTpb(\cX)$.

\begin{proposition} \label{prop:diff}
If $\xi \in \VVrb(t,\bmu)$ then $\xi'(\hTpb(x_i))$ for each $x_i \in \cX$ can be computed in $\cO(N + M)$ operations.
\end{proposition}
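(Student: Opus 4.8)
The plan is to represent $\xi$ in the transported basis and differentiate it term by term with the chain rule, using that the interpolation particle $\hTpb(x_i)$ is the image under $\hTpb$ of the fixed point $x_i \in \cX$.

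First I would note that, by \cref{def:tsubsp} and the linearity of the pullback \cref{eq:pullback}, every $\xi \in \VVrb(t,\bmu)$ is of the form $\xi = \sum_{n=1}^N c_n\,\hTpb^\flat\zeta_n$ with a unique coefficient vector $(c_n)_{n=1}^N$; this is exactly the data in which the reduced model carries $\xi$, so it is available at no cost. Next, for a fixed $x_i \in \cX$ with $\hTpb(x_i) \in \Dom$ (as in the preceding proposition), \cref{eq:pullback} gives $\hTpb^\flat\zeta_n = \zeta_n \circ \hTpb^{-1}$, and since $\hTpb$ is strictly increasing and piecewise affine, the one-sided chain rule yields
\[
\big(\hTpb^\flat\zeta_n\big)'\!\big(\hTpb(x_i)^{\pm}\big)
= \zeta_n'(x_i^{\pm})\cdot\big(\hTpb^{-1}\big)'\!\big(\hTpb(x_i)^{\pm}\big)
= \frac{\zeta_n'(x_i^{\pm})}{\hTpb'(x_i^{\pm})},
\]
because $\hTpb^{-1}(\hTpb(x_i)) = x_i$ and $\big(\hTpb^{-1}\big)'(y) = 1/\hTpb'(\hTpb^{-1}(y))$. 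Summing over $n$,
\[
\xi'\!\big(\hTpb(x_i)^{\pm}\big) = \frac{1}{\hTpb'(x_i^{\pm})}\sum_{n=1}^N c_n\,\zeta_n'(x_i^{\pm}).
\]

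The operation count is then immediate. The one-sided derivatives $\zeta_n'(x_i^{\pm})$ ($n = 1,\dots,N$) and $v_m'(x_i^{\pm})$ ($m = 1,\dots,M$) do not depend on $(\bfeta)$ and are tabulated once in the offline stage. Online, forming the numerator $\sum_{n=1}^N c_n\,\zeta_n'(x_i^{\pm})$ costs $\cO(N)$, forming the denominator $\hTpb'(x_i^{\pm}) = \sum_{m=1}^M \alpha_m(\bfeta)\,v_m'(x_i^{\pm})$ costs $\cO(M)$, and the division costs $\cO(1)$, for a total of $\cO(N+M)$ per particle (hence $\cO(N^2 + NM)$ across all $N$ particles), with no dependence on $N_\delta$.

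The only delicate point, and the main obstacle, is the well-definedness of the derivatives: $\hTpb$ and the $\zeta_n$ are merely continuous and piecewise linear, so $\hTpb(x_i)$ is in general a breakpoint of $\xi$ and the classical derivative there need not exist. This is why the identities above are stated with one-sided derivatives; they remain valid because $\hTpb$ is strictly increasing (so $y \to \hTpb(x_i)^{\pm}$ forces $\hTpb^{-1}(y) \to x_i^{\pm}$) and $\hTpb' > 0$ wherever it is defined, the latter since $\hTpb \in \TTrb \sqsubset \TT$. If a single value is wanted one uses either one-sided value, or their average; in every case the online cost stays $\cO(N+M)$.
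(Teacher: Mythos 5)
Your proof is correct and follows essentially the same route as the paper: apply the chain rule to get $(\hTpb^\flat\zeta_n)'(\hTpb(x_i)) = \zeta_n'(x_i)/\hTpb'(x_i)$, evaluate $\hTpb'(x_i) = \sum_{m=1}^M \alpha_m(\bfeta) v_m'(x_i)$ in $\cO(M)$ operations, and sum over $n$ in $\cO(N)$, with $\zeta_n'(x_i)$ and $v_m'(x_i)$ precomputed offline. Your explicit treatment of one-sided derivatives at the piecewise-linear breakpoints is a careful refinement that the paper's proof leaves implicit, but it does not change the argument or the operation count.
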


\begin{proof}
Since $\xi \in \VVrb(t,\bmu)$, we have $\xi'(x) = \sum_{n=1}^N \vartheta_n (\hTpb^\flat \zeta_n)'(x)$. Furthermore, the derivative  $(\hTpb^\flat \zeta_n)'$ at the particle location $\hTpb(x_i)$ is given by the chain rule,
\beq
     (\hTpb^\flat \zeta_n)' (\hTpb(x_i)) 
    = 
    \frac{1}{\hTpb'(x_i)} \zeta_n'(x_i)
    \quad 
    \text{ for } 
    i,n = 1, ..., N.
\eeq
Since $\hTpb \in \Span\{v_m\}_{m=1}^M$  \cref{eq:repr-Tpb}, its derivative at the particle locations is 
\begin{equation}
    \hTpb'(x_i) 
          = \sum_{m=1}^M \alpha_m(\bfeta) v_m'(x_i)
    \label{eq:dTpb}
    \text{ for } 
    i = 1, ..., N.
\end{equation}
Thus $\xi'(x_i)$ is computed in $3N+ 2M-2$ operations, that is, $\cO(N+M)$.
\end{proof}

\begin{remark}
In computing $\xi'(\hTpb(x_i))$ the values $\{\zeta_n'(x_i)\}_{n=1}^N$, $\{v_m'(x_i)\}_{m=1}^M$ do not depend on $(t,\bmu)\in \cH$, so they can be pre-computed during the offline stage.
\end{remark}

\subsection{Change of basis}
\label{sec:cob}

When the transport map $\hTpb$ and therefore $\VVrb(t,\bmu)$ is updated, the approximation \cref{eq:repr-Tpb} allows a change of basis procedure with computational effort of $\cO(N+M)$ and does not require expensive calculations that depend on the size of the full model $N_\delta$. Note that the following change of basis is approximate and preserves the function values at the particles only.

Here we describe how to perform the change of basis. Given the local basis
$\VVrb \subset \VVh$, interpolation points $\cX = \{x_i\}_{i=1}^N$, transport
maps  $\hat{T}_{(1)},\hat{T}_{(2)} \in \TTrb$ and corresponding particles
$\hat{T}_{(1)}(\cX)=\{x_i^{(1)}\}_{i=1}^N$,
$\hat{T}_{(2)}(\cX)=\{x_i^{(2)}\}_{i=1}^N$, suppose $u^{(1)} \in
\VVrb(\hat{T}_{(1)})$ and we wish to compute $u^{(2)} \in
\VVrb(\hat{T}_{(2)})$, satisfying $u^{(1)}(x_i^{(2)}) = u^{(2)}(x_i^{(2)})$ for
all $i = 1, ..., N$. 

Writing $u^{(1)},u^{(2)}$ and $T^{(1)},T^{(2)}$ as
\begin{equation}
    \begin{aligned}
    u^{(1)}(x) &= \sum_{n=1}^N \beta_n^{(1)} 
                    \hat{T}_{(1)}^\flat \zeta_n(x),
    \quad
    \hat{T}_{(1)}(x) = \sum_{m=1}^M \alpha_m^{(1)} v_m(x),
    \\
    u^{(2)}(x) &= \sum_{n=1}^N \beta_n^{(2)} 
                    \hat{T}_{(2)}^\flat \zeta_n(x),
    \quad
    \hat{T}_{(2)}(x) = \sum_{m=1}^M \alpha_m^{(2)} v_m(x),
    \end{aligned}
    \label{eq:cob_new}
\end{equation}
it is clear that, to determine  $u^{(2)}(x)$ it suffices to compute its coefficients $\{\beta_n^{(2)}\}_{n=1}^N$. Moreover, if we can compute the values $u^{(1)}(x_i^{(2)})$ then by empirical interpolation we can solve for $\{\beta_n^{(2)}\}_{n=1}^N$ in
\begin{equation}
    \sum_{n=1}^N  \beta_{n}^{(2)} \hat{T}_{(1)}^\flat \zeta_{n}(x_i^{(2)})
    =
    \sum_{n=1}^N \beta_{n}^{(2)} \zeta_{n}(x_i)
    = u^{(1)}(x_i^{(2)}),
    \quad
    \text{ for } i = 1, ..., N,
\end{equation}
and therefore the desired approximation $u^{(2)}(x)$. So it remains to evaluate $u^{(1)}$ at $x_i^{(2)}$, or equivalently, $\hT_{(1)}^\flat \zeta_n$ at $x_i^{(2)}$.

Note that $u^{(1)}$ is piecewise linear on a non-uniform grid, thus evaluating it at an arbitrary point $x_i^{(2)} \in \Dom$ entails an operation that scales as $\cO(N_\delta)$. But if $x_i^{(1)}, x_i^{(2)}$ are close enough, the evaluation only involves local calculations on the uniform grid.

\begin{proposition}
If $0 < \pm (\hT_{(1)}^{-1}(x_i^{(2)}) - x_i) < \delta$ then
\begin{equation}
    \hat{T}_{(1)}^\flat \zeta_n(x^{(2)}_i)
          =
    \zeta_n(x_i)
      + \zeta'_n(x_i^\pm)
         (\hT_{(1)}^{-1}(x_i^{(2)}) - x_i), 
    \label{eq:eval}
\end{equation}
in which $x_i^-,x_i^+$ refers to left and right limits.
\end{proposition}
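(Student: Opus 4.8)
The plan is to reduce the statement to the elementary fact that $\zeta_n$ is affine on each cell of the uniform grid underlying $\VVh$ and that the interpolation point $x_i$ is a vertex of that grid. First I would unfold the pullback: by \cref{eq:pullback} and \cref{def:transport}, $\hat{T}_{(1)}^\flat \zeta_n(x_i^{(2)}) = \bar\zeta_n\bigl(\hat{T}_{(1)}^{-1}(x_i^{(2)})\bigr)$, so the whole claim concerns the evaluation of the extended nodal function $\bar\zeta_n$ at the single point $y := \hat{T}_{(1)}^{-1}(x_i^{(2)})$. The hypothesis $0 < \pm(y - x_i) < \delta$ says exactly that $y$ lies in the open grid cell immediately to the right of $x_i$ (upper sign) or immediately to the left (lower sign).

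Next I would record that every interpolation point generated by the empirical interpolation construction of \cref{sec:ipts} is a grid vertex $x_\ell + (k-1)\delta$. This is because each function whose modulus is maximized in a greedy EIM step — $\zeta_1$, and then the residuals $\zeta_I - \sum_{n=1}^{I-1} \vartheta_n^{I-1}\zeta_n$ — belongs to $\VVh$ and is therefore continuous and piecewise linear on the uniform grid, so the maximum of its absolute value over $\cDom$ is attained at a vertex. Hence $\zeta_n$ restricted to $[x_i, x_i+\delta]$ (resp. $[x_i-\delta, x_i]$) is affine, with constant slope equal to the one-sided limit $\zeta_n'(x_i^+)$ (resp. $\zeta_n'(x_i^-)$).

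Finally, because an affine function equals its first-order Taylor polynomial exactly, for $y$ in that half-cell we get $\bar\zeta_n(y) = \zeta_n(x_i) + \zeta_n'(x_i^\pm)(y - x_i)$; substituting $y = \hat{T}_{(1)}^{-1}(x_i^{(2)})$ yields \cref{eq:eval}. I do not expect a genuine obstacle: the only points requiring attention are the bookkeeping of the one-sided derivatives and the boundary cells — when $x_i = x_\ell$ or $x_i = x_r$, the relevant one-sided slope is that of the constant extension $\bar\zeta_n$, namely zero, and the identity still holds. The hypothesis is used only to confine $y$ to a single cell, so that the evaluation needs merely the local $\cO(1)$ data $\zeta_n(x_i)$ and $\zeta_n'(x_i^\pm)$, which is precisely what makes the surrounding change-of-basis step cost $\cO(N+M)$ rather than $\cO(N_\delta)$.
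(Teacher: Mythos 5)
Your proof is correct and takes essentially the same route as the paper's, whose entire argument is the observation that $\zeta_n \in \VVh$ is continuous and piecewise linear on the uniform grid, so that the hypothesis confines $\hT_{(1)}^{-1}(x_i^{(2)})$ to a single cell on which $\zeta_n$ is affine and hence equal to its first-order Taylor expansion. Your extra remark that the EIM points $x_i$ are grid vertices (so the one-sided slopes $\zeta_n'(x_i^{\pm})$ really are the constant slopes on the adjacent half-cells) is a detail the paper leaves implicit but does not alter the argument.
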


\begin{proof}
This follows directly from the fact that $\zeta_n \in \VVh$ so it is
continuous and piecewise linear on a uniform grid.
\end{proof}

The values $\zeta_n(x_i), \zeta_n'(x_i^\pm)$ are already known and do not
depend on the transport maps. Hence $u^{(1)}(x_i^{(2)})$ can be computed using
$\cO(MN+N^2)$ values near the particles that can be pre-computed during the
offline stage.

Note that if $x^{(2)}_i$ lies further away from $x_i^{(1)}$, one can compute additional corrections to \cref{eq:eval} to obtain the exact value of $\hat{T}_{(1)}^\flat \zeta_n(x^{(2)}_i)$. Moreover, in case a basis $\{\zeta_n\}_{n=1}^N$ of higher-order polynomial degree was given, corrections can be made to \cref{eq:eval}.

\subsection{Online reduced-model time-update}\label{sec:rom}

In this section, we discuss an online efficient evaluation of the reduced model by time-stepping. We build on Godunov flux updates and employ Godunov splitting for the source term, but other schemes are possible following similar steps as below.

During the evaluation, $\bmu$ is fixed in our PDE \cref{eq:prob}, so we will omit the dependence on the parameter on $\bmu$ by writing, e.g. $\hat{u}(x,t) = \hat{u}(x,t;\bmu)$, $f(u,x) = f(u,x;\bmu)$, $\psi(u,x) = \psi(u,x;\bmu)$, $\hat{T}_{t}= \hTpb$, $\VVrb(t)=\VVrb(t,\bmu)$, $\alpha(t) = \alpha(t,\bmu)$, $\beta(t) = \beta(t,\bmu)$, $\cX(t) = \cX(t,\bmu)$.

We write our reduced-model approximation in a time-discrete form, by discretizing the time variable by uniform time-steps $0=:t_0 < t_1 < ... < t_K := t_\textrm{F}$. Our fully discrete reduced-model approximation becomes
\begin{equation}
    \hat{u}_{(k)}(x) = 
    \sum_{n=1}^N \beta_n^{(k)} \hat{T}_{(k)}^\flat \zeta_n(x),
    \quad
    \hat{T}_{(k)}(x) = \sum_{m=1}^M \alpha_m^{(k)} v_m(x),
    \label{eq:td-rm}
\end{equation}
and corresponding particles are denoted by $\cX^{(k)} = \{x_i^{(k)}\}_{i=1}^N$. 
The discretization is initialized with $\alpha_1^{(0)} = 1$, $\alpha_m^{(0)} = 0$ for $m > 1$ and $\beta_n^{(0)} = (u_\delta(x,0), \zeta_n)/ (\zeta_n,\zeta_n)$ where $(\cdot,\cdot)$ is the inner product in $\mathbb{V}$. The continuous approximation $\hat{u}(x,t)$ \cref{eq:repr-Tpb} can be constructed as the continuous piecewise linear interpolant satisfying $\hat{u}(x,t_k) = \hat{u}_{(k)}(x)$.

The reduced-model time-update during the online stage comprises 3 steps:
\begin{enumerate}[label=(S\arabic*)]
    \item \textbf{Evolution of the PDE.} Update the coefficients $\{\beta_n^{(k)}\}_{n=1}^N$ to $\{\bar{\beta}_n^{(k+1)}\}_{n=1}^N$  using the PDE while keeping the transported subspace $\VVrb^{(k)}$ fixed.
    \item \textbf{Update of the transport map.} Update  $\hat{T}_{(k)}$ to $\hat{T}_{(k+1)}$, that is, $\{\alpha_m^{(k)}\}_{m=1}^M$ to $\{\alpha_m^{(k+1)}\}_{m=1}^M$. This forms the new transported subspace $\VVrb^{(k+1)}$.
    \item \textbf{Change of basis.} Compute $\{\beta_n^{(k+1)} \}_{n=1}^N$ from $\{\alpha_m^{(k+1)}\}_{m=1}^M$ and $\{\bar{\beta}_n^{(k+1)}\}_{n=1}^N$ by a change of basis where the evolved solution is projected onto the new transported subspace $\VVrb^{(k+1)}$.
\end{enumerate}
We detail each step in individual subsections below. A concise algorithmic description of the time-update is given in \cref{sec:alg}.
 
\subsubsection{Evolution of the PDE}
\label{sec:evolve}

In the first step (S1), we update the coefficients $\beta_n^{(k)}$ to $\bar{\beta}_n^{(k+1)}$ while keeping the transport map $\hat{T}_{(k)}$ fixed. This is done by evolving the PDE forward using a Godunov splitting (see \cite{fvmbook}): First we perform the flux update using the Godunov flux, then the source update by adding the contribution from the nonlinear source term. In both cases, the update is projected to the transported basis $\VVrb^{(k)}$. The evolution steps follows in 2 stages \cref{eq:s1-1,eq:s1-2}.

During the flux update the intermediate step $u_*(x)$ with coefficients $\{\beta^*_n\}_{n=1}^N$ is obtained by adding the flux contribution,
\begin{equation}
\beta^*_n  = \beta^{(k)}_n + \lambda \, \theta_n^{(k)},
\quad
\text{ for } n = 1, ..., N,
\tag{S1.1}
\label{eq:s1-1}
\end{equation}
where $\theta_n^{(k)}$ arises from the flux term $\partial_x f$ in the PDE \cref{eq:prob}.

Then the source update is given by 
\begin{equation}
\bar{\beta}_n^{(k+1)} = \beta^*_n + \Delta t \, \omega_n^{(k)}
\quad
\text{ for } n = 1, ..., N,
\tag{S1.2}
\label{eq:s1-2}
\end{equation}
where $\omega^{(k)}$ corresponds to the source term $\psi$ in the PDE \cref{eq:prob}.

In the remainder of this section, we will discuss how to compute the contributions $\{\theta_n^{(k)}\}_{n=1}^N$ and $\{\omega_n^{(k)}\}_{n=1}^N$ using empirical interpolation.

The desired flux update is
\begin{equation}
u_{*}(x) = \hat{u}_{(k)}(x) 
- \lambda \Delta \cF(\hat{u}_{(k)}(x),x),
\end{equation}
where $\lambda = \Delta t / \Delta x$ is the ratio between the size of the time-step and the spatial grid-width of the full model ($\Delta x = \delta$), and the numerical flux difference
$\Delta \cF$ is defined as the upwind flux, 
\begin{equation}
\Delta \cF(\hat{u}_{(k)}(x),x)
:= 
f(\hat{u}_{(k)}(x),x) - f(\hat{u}_{(k)}(x - \Delta x),x - \Delta x).
\end{equation}
To approximate the flux difference $- \Delta \cF(\hat{u}_{(k)}(x),x)$ using the basis $\{\hat{T}_{(k)}^\flat \zeta_n\}_{n=1}^N$ by empirical interpolation, we impose that the two agree at the particles $\{x_i^{(k)}\}_{i=1}^N$,
\beq
    \sum_{n=1}^N \theta_{n}^{(k)} \hat{T}_{(k)}^\flat \zeta_{n}(x^{(k)}_i)
    =
   -\Delta \cF(\hat{u}_{(k)}(x^{(k)}_i),x^{(k)}_i)
    \quad
    \text{ for } i = 1, ..., N.
\eeq
That is, one solves for the coefficients $\{\theta_n^{(k)}\}_{n=1}^N$ in the system
\begin{equation}
\sum_{n=1}^N \theta_{n}^{(k)} \zeta_{n}(x_i) 
    = 
- \Delta \cF(\hat{u}_{(k)}(x^{(k)}_i),x^{(k)}_i)
    \quad
\text{ for } i = 1, ..., N.
\end{equation}
The flux difference $\Delta \cF(\hat{u}_{(k)}(x^{(k)}_i),x^{(k)}_i)$ is then approximated by $f^{(k)}_{\ell,i} - f^{(k)}_{r,i}$ in which
\begin{equation}
\begin{aligned}
f^{(k)}_{\ell,i}
&:= 
f\left(\sum_{n=1}^N \beta_{n}^{(k)} 
(\zeta_{n}(x_i) - (\hat{T}_{(k)}^\flat \zeta_n)' (x_i^{(k)}) \Delta x), x_i^{(k)} - \Delta x 
\right),
\\
f^{(k)}_{r,i}
&:= 
f\left(\sum_{n=1}^N \beta_{n}^{(k)} \zeta_n(x_i),x_i^{(k)} \right).
\end{aligned}
\end{equation}
The derivative in the first term can be computed efficiently (\cref{prop:diff}). This concludes the computation of $\{\theta_n^{(k)}\}_{n=1}^N$.

Next, we compute the contribution from the source term $\psi(u_*(x),x)$. To approximate the contribution with the basis $\{\hat{T}^\flat_{(k)} \zeta_n \}_{n=1}^N$ using empirical interpolation, we solve the system
\begin{equation}
\sum_{n=1}^N \omega_{n}^{(k)} \zeta_{n}(x_i) 
 =  \psi(u_*(x^{(k)}_i),x_i^{(k)})
 \quad
 \text{ for } i = 1, ..., N.
\end{equation}
This yields the coefficients $\{\omega_n^{(k)}\}_{n=1}^N$.

Having updated $\{\beta_n^{(k)}\}_{n=1}^N$ to $\{\bar{\beta}_n^{(k+1)}\}_{n=1}^N$ we let
\begin{equation}
\bar{u}_{(k+1)}(x) := 
\sum_{n=1}^N \bar{\beta}_n^{(k+1)} \hat{T}_{(k)}^\flat \zeta_n(x).
\end{equation}
This completes the first step (S1).

\subsubsection{Update of the transport map}
\label{sec:update_transport}

In the second step (S2), we determine the appropriate subsequent transport map $\hat{T}_{(k+1)}$, that is, we update the transport mode coefficients $\{\alpha_m^{(k)}\}_{m=1}^M$ to $\{\alpha_m^{(k+1)}\}_{m=1}^M$.  The update is given by
\begin{equation}
    \alpha_m^{(k+1)} = \alpha_m^{(k)} + \Delta t \eta_m^{(k)}
    \quad
    \text{ for } m = 1,..., M.
    \tag{S2}
    \label{eq:s2}
\end{equation}
The contribution $\{\eta_m^{(k)}\}_{m=1}^M$ is obtained by solving a system of equations for a subset $\cQ = \{x_{i_j}\}_{j=1}^M \subset \cX$,
\begin{equation}
    \sum_{m=1}^M  \eta_{m}^{(k+1)} v_{m}(x_{i_j})  
    =
 - 
 \left(
 \frac{\bar{u}_{(k+1)}(x^{(k)}_{i_j}) - \hat{u}_{(k)}(x^{(k)}_{i_j})}
     {\partial_x \hat{u}_{(k)}(x^{(k)}_{i_j})}
  \right)
  \quad
\text{ for } j = 1, ... , M.
    \label{eq:T_update_alpha}
\end{equation}

We will derive the update \cref{eq:s2} and discuss the choice of $\cQ \subset \cX$. For each interpolation particle $x^{(k)}_i \in \cX^{(k)}$, we employ the following update,
\beq
    x^{(k+1)}_i 
    = 
    x^{(k)}_i
    - \Delta t \chi_i^{(k)},
    \quad
    \chi_i^{(k)}
    :=
     \left(\frac{\partial_t \hat{u}(x^{(k)}_i,t_k)}
         {\partial_x \hat{u}(x^{(k)}_i,t_k)}\right),
    \quad
    i = 1, ..., N.
    \label{eq:particle_update}
\eeq
The term $\chi_{i}^{(k)}$ can be approximated by our time-update $\bar{u}_{(k+1)}(x)$ detailed in the previous step,
\begin{equation}
    \frac{\bar{u}_{(k+1)}(x^{(k)}_i) -    \hat{u}_{(k)}(x^{(k)}_i)}
       {\partial_x \hat{u}_{(k)}(x^{(k)}_i)}.
    \label{eq:T_update}
\end{equation}
Computing the transport mode coefficients of this term via empirical interpolation using $\cQ$ as interpolation points results in the formula \cref{eq:T_update_alpha}.

Next, we discuss how to choose $M$ points $\cQ \subset \cX$. We will assume $N > M$ and select a subset of $\cX$ to serve as interpolation points for $\{v_m\}_{m=1}^M$. This assumption is not restrictive, since in the case $N \le M$ similar procedures can be devised. We choose the subset that greedily maximizes the amplitude of $\partial_x \hat{u}_{(0)}$. Let $Q_j := \{x_{i_1}, ... , x_{i_j}\}$ and let $Q_0 := \emptyset$, then for $j = 1, ... , M$, 
\begin{equation}
i_{j}  := \argmax_{i} 
\{|\partial_x \hat{u}|(x_i,t_0): x_i \in \cX \setminus Q_{j-1} \}.
\label{eq:selectipts}
\end{equation}
The subset $\mathcal{Q} := Q_M = \{x_{i_j}\}_{j=1}^M$ are to be used with $\{v_m \}_{m=1}^M$.

\begin{remark}
The choice of $\cQ$ is critical and although we provided a heuristic procedure we use in our numerical examples, it is an open problem as to which points in the domain $\cQ \subset \Dom$ are optimal to choose. Note that the update \cref{eq:particle_update} is motivated by the characteristic curves to the problem \cref{eq:prob}.
\end{remark}

\begin{remark}[Efficient computations involving $\hat{T}_{(k)}$]
The inversion of $\hat{T}_{(k)}$ appearing in various forms above can be
performed efficiently, and a separate procedure is not necessary. For example,
consider the computation of the spatial deriative $\partial_x \hat{u}_{(k)}$ at
the point $x_i^{(k)}$ 
\[
  \partial_x \hat{u}_{(k)}(x_i^{(k)},t_k)
  =
  \sum_{n=1}^N \beta_n^{(k)} \hat{T}^\flat_{(k)} \zeta_n'(x_i^{(k)})
                             (\hat{T}_{(k)}^{-1})' (x_i^{(k)}).
\]
Now, $\hat{T}^\flat_{(k)}\zeta'_n(x_i^{(k)}) = \zeta'_n(x_i)$, and
$\hat{T}^{-1}_{(k)} (\hat{T}_{(k)} (x_i)) = x_i$ so $(\hat{T}_{(k)}^{-1})'
(\hat{T}_{(k)} (x_i))  = 1/\hat{T}_{(k)}' (x_i)$ where $\hat{T}'_{(k)} (x_i) =
\sum_{m=1}^M \alpha_m^{(k)} v'_m(x_i)$. Computing this only requires knowing
$v_m'(x_i)$. Moreover, when these computations are needed at points near
$\{x_i^{(k)}\}$, one uses the formula \cref{eq:eval}. 
\end{remark}

\begin{remark}[Invertibility of $\hat{T}_{(k)}$]
The injectivity of the transport map $\hat{T}_{(k)}$ \cref{eq:td-rm} is not
strictly enforced, although there are straightforward ways of doing so. For
example, in the representation of $\hat{T}_{(k)}$ in \cref{eq:td-rm}, one can
enforce that the derivative $\hat{T}_{(k)}'(x) = \sum_{m=1}^M \alpha_m^{(k)}
v_m'(x)$, is positive by checking this holds at the maxima of $|v_m'|$ for
$m=1, ... , M$.  This provides linear constraints on $\alpha_m^{(k)}$. However,
enforcing these constraints implies a modification of the dynamical system
induced by the explicit time-stepping scheme. This topic will be explored in a
separate work.  
\end{remark}

\subsubsection{Change of basis}
\label{sec:update_basis}

In the third and final step (S3), we close the time-update loop via a change of basis discussed in the \cref{sec:cob}, by computing $\hat{u}_{(k+1)}(x)$ from $\bar{u}_{(k+1)}(x)$, that is, $\{\beta_n^{(k+1)}\}_{n=1}^N$ from $\{\bar{\beta}_n^{(k+1)}\}_{n=1}^N$ in 
\begin{equation}
\begin{aligned}
\hat{u}_{(k+1)}(x_i^{(k+1)}) &= 
\sum_{n=1}^N \beta_n^{(k+1)} \hat{T}_{(k+1)}^\flat \zeta_n(x^{(k+1)}_i) \\
&=
\sum_{n=1}^N \beta_n^{(k+1)} \zeta_n(x_i) 
= 
\sum_{n=1}^N \bar{\beta}_n^{(k+1)} \hat{T}_{(k)}^\flat \zeta_n(x^{(k+1)}_i),
\quad i = 1, ..., N.
\end{aligned}
\tag{S3}
\label{eq:s3}
\end{equation}

We remark that the finite propagation speed of \cref{eq:prob} and the Courant-Friedrichs-Lewy condition for the full model implies that the interpolation particles $\{x^{(k+1)}_i\}_{i=1}^N$ should not move more than one full-model grid-width $\delta$ at each time-step, although this is not rigorously guaranteed for the online reduced model. 

\subsection{Reconstruction of the reduced solution}

As a result of the online computations, we obtain two sets of coefficients in \cref{eq:repr-Tpb}, namely $\{\alpha^{(k)}_m\}_{m=1}^M$ and $\{\beta^{(k)}_n\}_{n=1}^N$ for $k = 0,1, ... , K$. To utilize this reduced-model solution, one can reconstruct the solution on the full-model grid, and evaluate it to desired accuracy.  While this reconstruction procedure does depend on the full-model degree of freedom $N_\delta$, it can be computed in an embarrassingly parallel manner in both space and time: for each spatial point on the full-model grid and for each time.

\section{Algorithms}\label{sec:alg}

In this section, we provide a concise summary of the MATS algorithm introduced above. The offline computations are given in \cref{alg:offline} and the online computations in \cref{alg:online}.

\begin{algorithm}[h]
\caption{MATS Offline}
\label{alg:offline}
\begin{minipage}{0.95\textwidth} 
~\\
Compute basis $\{v_m\}_{m=1}^M$ of $\TTrb$
\begin{myitemize2}
    \item[1.] Collect snapshots $\cSg$ satisfying the signature condition 
    \item[2.] Compute DIP maps $\{T_\ell\}_{\ell=1}^\Ng$
    \item[3.] Compute low-rank transport modes $\{v_m\}_{m=1}^M$ from $\{T_\ell\}_{\ell=1}^\Ng$ via SVD
\end{myitemize2}
Compute basis $\{\zeta_n\}_{n=1}^N$ of $\VVrb$
\begin{myitemize2}
    \item[1.] Collect snapshots $\cSl$ from local time-parameter region 
    \item[2.] Compute reduced basis $\{\zeta_n\}_{n=1}^N$
    from $\cSl$ via SVD
    \item[3.] Compute interpolation points $\{x_i\}_{i=1}^N$ via EIM/DEIM
\end{myitemize2}
~
\end{minipage}
\end{algorithm}

\begin{algorithm}[h]
\caption{MATS Online}
\label{alg:online}
    \begin{minipage}{0.95\textwidth}
    Initialize $\{\alpha_m^{(0)}\}_{m=1}^M$,
    $\{\beta_n^{(0)}\}_{n=1}^N$, 
    $\hat{T}_{(0)} \gets \sum_{m=1}^M \alpha_m^{(0)} v_m,$
    $\{x_i^{(0)} \}_{i = 1}^N$,
    $\lambda \gets \Delta t / \Delta x$\\
    Time-step for $k=0, ... , K$
        \begin{myitemize2}
            \item[1.] Compute flux $\{\theta_n^{(k)}\}_{n=1}^N$, then compute flux update
            {\begin{align}
                \beta_n^{*}
                &\gets
                \beta_n^{(k)} + \lambda \theta_n^{(k)},
                \quad n = 1, ..., N.
                \tag{\ref{eq:s1-1}}
            \end{align}
            Compute source $\{\omega_n^{(k)}\}_{n=1}^N$, then compute source update
            \begin{align}
                \bar{\beta}_n^{(k+1)}
                &\gets
                \beta_n^{*} + \Delta t \omega_n^{(k)},
                \quad n = 1, ..., N.
                \tag{\ref{eq:s1-2}}
            \end{align}}
            \item[2.] Compute $\{\eta_m^{(k)}\}_{m=1}^M$, then compute transport map update
            {\begin{align}
                \alpha_m^{(k+1)}
                &\gets
                \alpha_m^{(k)}
                +
                \Delta t \eta_m^{(k)},
                \quad m = 1, ..., M,
                \tag{\ref{eq:s2}}
                \\
            \hat{T}_{(k+1)}
            &\gets
            \sum_{m=1}^M \alpha_m^{(k+1)} v_m,
            \notag
            \\
            x^{(k+1)}_i 
            &\gets
            \hat{T}_{(k+1)}(x_i),
            \quad i = 1, ..., N.
            \notag
            \end{align}}
            \item[3.] Change of basis
           \beq
               \beta_n^{(k+1)}
                \gets 
          \Call{Solve}{}
          \left[
          \begin{aligned}
\sum_{n=1}^N \beta_n^{(k+1)} \zeta_n(x_i) 
= 
\sum_{n=1}^N \bar{\beta}_n^{(k+1)} &\hat{T}_{(k)}^\flat \zeta_n(x^{(k+1)}_i)
        \\
        i&=1, ..., N.
        \end{aligned}\right]
        \tag{\ref{eq:s3}}
        \eeq
    \end{myitemize2}
        ~
    \end{minipage}
\end{algorithm}

\section{Numerical examples} \label{sec:numerics}

We will demonstrate the methods discussed in the previous sections through two
representative examples. One is the color equation with parametrized variable
speed, and the other is the Burgers' equation with a parametrized source term.
Throughout, we will denote by $N$ and $M$ the dimensions of our reduced model
\cref{eq:repr-Tpb}, $N = \dim \VVrb$, $M = \dim \TTrb.$ 

The runtime comparisons were performed on the Intel Xeon {\tt E5-2690v4}
2.6GHz CPU with 64GB of RAM, and all implementations were in \textsc{Python}.
The code used for the computations in this section is available in a public
repository \cite{matsrepo}.

\subsection{Color equation with parameter-dependent heterogeneous media}
\label{sec:color}
Let us consider the color equation with parametrized variable speed that is smooth,
\begin{equation}
    \left\{ \begin{aligned}
    \partial_t u + c(x;\bmu) \partial_x u &= 0
            \quad (x,t) \in (0,2) \times (0,1)\\
        u(x,0) &= u_0(x), \\
        u(0,t) &= u_0(0).
    \end{aligned} \right.
    \label{eq:prob-color}
\end{equation}
The initial condition is the cosine hump of width $0.4$ centered at $x= 0.25$,
\begin{equation}
    u_0(x) := 
    \begin{cases}
    \frac{1}{2} + \frac{1}{2} \cos(5\pi (x - 0.25)) 
        & \text{if } 0.05 \le x \le 0.45.
         \\
    0 & \text{if } x < 0.05 \text{ or } x > 0.45,
    \end{cases}
\end{equation}
This is a problem of the type \cref{eq:prob} with $f(u,x;\bmu) = c(x;\bmu) u$, $\psi(u,x;\bmu) = \partial_x c(x;\bmu) u$. 

The variable speed $c(\cdot;\bmu)$ is defined as a modulation of a constant background speed by a mixture of two parameter-dependent harmonic functions,
\beq
    c(x;\bmu) = 1.5 + \mu_1 \sin(\mu_2 x) + 0.1 \cos(\mu_3 x).
    \label{eq:cx}
\eeq
The parameters $\mu_2$ and $\mu_3$ correspond to fast and slow oscillations, respectively, and $\mu_1$ controls the amplitude of the fast oscillation. We choose the parameter domain $\cD := [0.25,0.5] \times [2\pi,6\pi] \times [\pi, 1.1\pi]$. One can numerically check that for this problem the signature condition (\cref{def:sgn}) is satisfied by the full model (see \cref{rmk:full}).

Our strategy for collecting snapshots is to (1) choose random samples for the parameter $\bmu \in \cD$, and (2) pick two time-intervals, one local (near initial time) and one global. Then the full-model solutions are computed for randomly drawn $\bmu$: Solutions for the local time-interval will form the local snapshots, and solutions for the global time-interval will form the the global snapshots.
    
We pick random parameters $\bmu$ by drawing 25 independent uniformly
distributed random samples in the domain $\cD$. We denote the set of these
samples to be $\cNl$, see \cref{tab:testerror-color} for a scatter plot. Local
snapshots computed with the full model are taken from equally spaced times in
$[0,0.08]$. Then the local snapshots are taken for time-parameter values
\[
    \cHl := \{(t,\bmu): t \in \cTl, \bmu \in \cNl \},
    \quad
    \cTl := \{0.02i: i = 0,1, ... ,4\}.
\]
Hence, we obtain the local snapshots $\cSl$ \cref{eq:sns_loc} of size $|\cSl| = 125$.

Global snapshots $\cSg$ are taken over longer periods of time,
\[
    \cHg := 
    \cHl \cup \{(t,\bmu): t \in \cTg, \bmu \in \cNl \},
    \quad
    \cTg := \{0.1i: i = 1,2, ... ,10\},
\]
yielding the global snapshots $\cSg$ \cref{eq:T_snapshot} of size $|\cSg| = 375$.

For the reduced model, we choose a uniform time-step size with a fixed ratio $\lambda = \Delta t / \Delta x = 0.5$. The reduced model is run up to $K=2400$ time-steps. 

Speedup results shown in \cref{fig:speedup} reflect the theoretical complexity
of $\cO(N)$ per time-step, where the runtime is independent of the size of the
full model. This is in contrast to the $\cO(N_\delta)$ of the full-model
solution, as well as the reduced model employing POD-DEIM that achieves
comparable accuracy. The offline computation time not including the full model
runtime was 136.07s.

We draw 10 test parameters from a uniform random distribution on $\cD$ (see \cref{tab:testerror-color}) and compare the reconstruction with the full model. The average $L^1$ relative error between the reduced model and the full model was computed every 100 time-steps for each test parameter. The result for individual test parameters for dimensions $(N,M) = (12,4)$  is shown in \cref{tab:testerror-color}: The error over a single time-step is below $10^{-3}$, but the error accumulates over time and can reach $10^{-2}$. Next, we vary the dimensions $(N,M)$ and compute the average error over the 10 test parameter samples. The result is shown in \cref{fig:speedup} and serves as numerical evidence that the $(N,M)$-width for the problem \cref{eq:prob-color} is small. The general trend is that for higher $(N,M)$ the error decreases, however this trend is not strictly monotone, especially with respect to $N$. 

A plot of the reduced and full-model solutions, together with the speed $c(x;\bmu)$ and the trajectory of the particles are shown in \cref{fig:romsol-color}, demonstrating good agreement between the reduced model and the full model.

\begin{remark}
The nonlinear reduced representation is mesh-free and the time-stepping reduced
solution is able to leave the domain as long as the problem can be extended
beyond the prescribed domain. This is the case in this example, since the
velocity field $c(\cdot ;\boldsymbol{\mu})$ \cref{eq:cx} yields such an
extension; so the reduced solution can be solved up to a final time longer than
that specified in the problem. 
\end{remark}

\begin{figure}
\centering
\begin{tabular}{cc}
\includegraphics[width=0.4\textwidth]{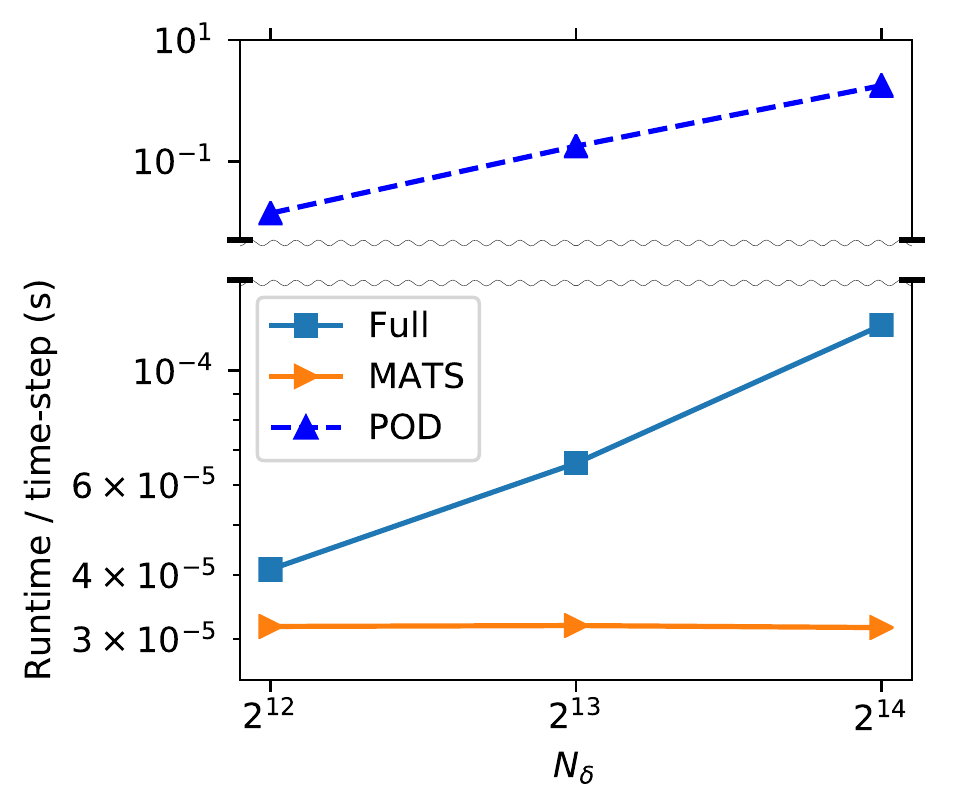}
&
\includegraphics[width=0.5\textwidth]{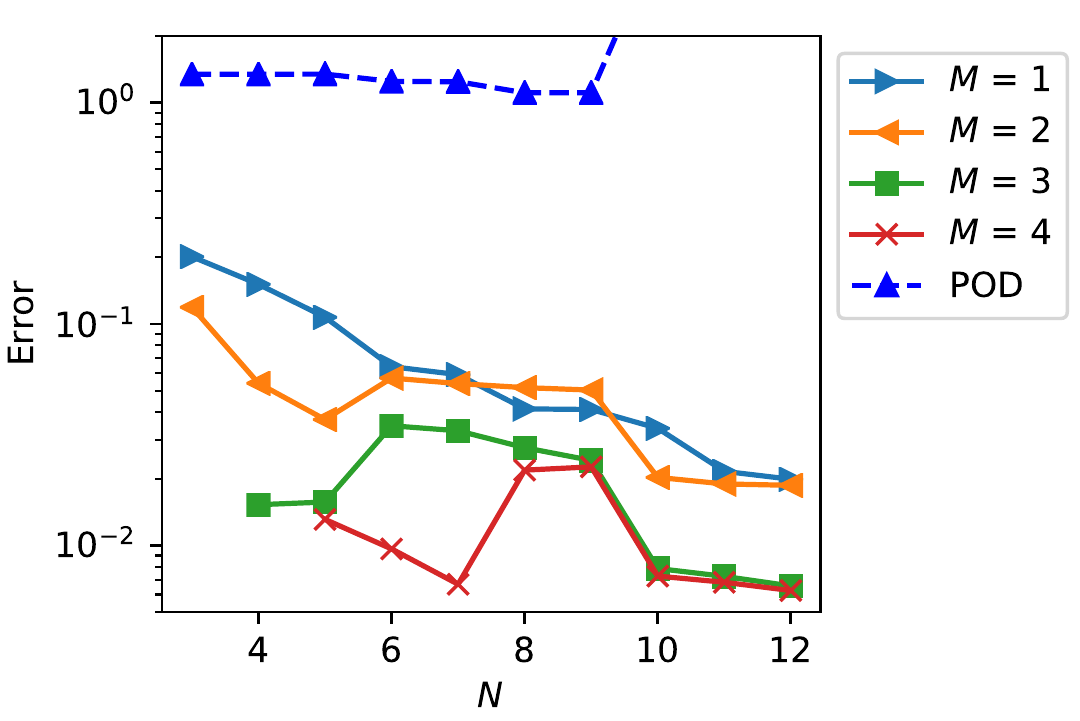}
\end{tabular}
\caption{Runtime comparison for single time-step of full model of varying size $N_\delta$ and corresponding reduced model of dimensions $(N,M) = (12,4)$, and POD-DEIM with comparable accuracy (left).  Average relative error plot for the color equation example \cref{eq:prob-color} with various dimensions $(N,M)$ (right).}
\label{fig:speedup}
\end{figure}

\begin{figure}
\centering
\begin{tabular}{cc}
    \includegraphics[height=0.45\textwidth,trim=1.0cm 0cm 0cm 0cm]{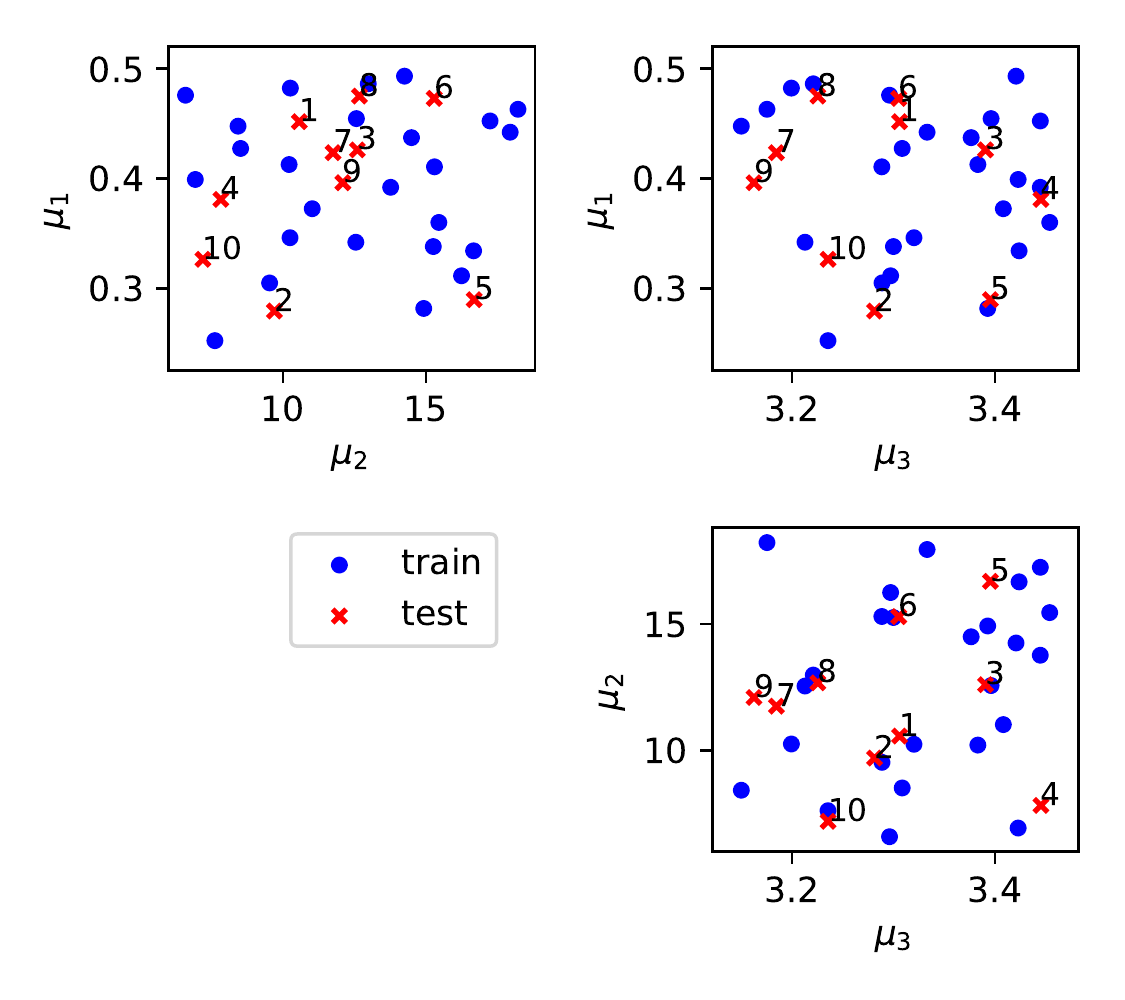}
     &  
     \includegraphics[height=0.46\textwidth,trim=1.0cm 0cm 0cm 0cm]{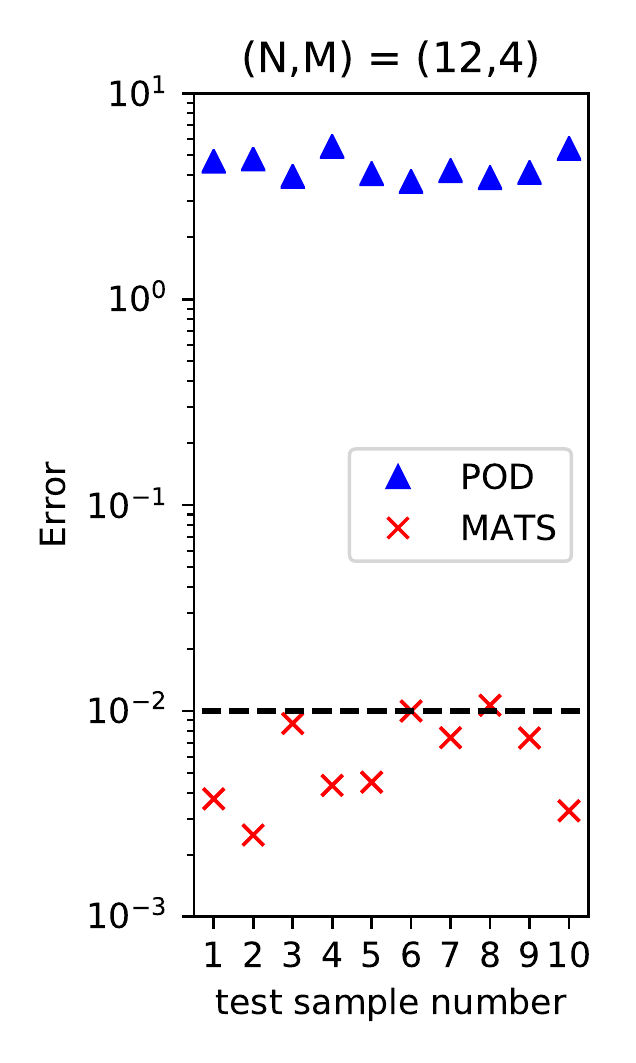}
     \\
\end{tabular}
\caption{Training and test parameter samples used for the color equation example (left). $L^1$-relative error of the reduced model of dimensions $(N,M) = (12,4)$ averaged over time, and corresponding error for POD-DEIM with dimension $N$ (right).}
\label{tab:testerror-color}
\end{figure}

\begin{figure}
\centering
\includegraphics[width=1.0\textwidth,trim=0cm 1cm 0cm 0cm]{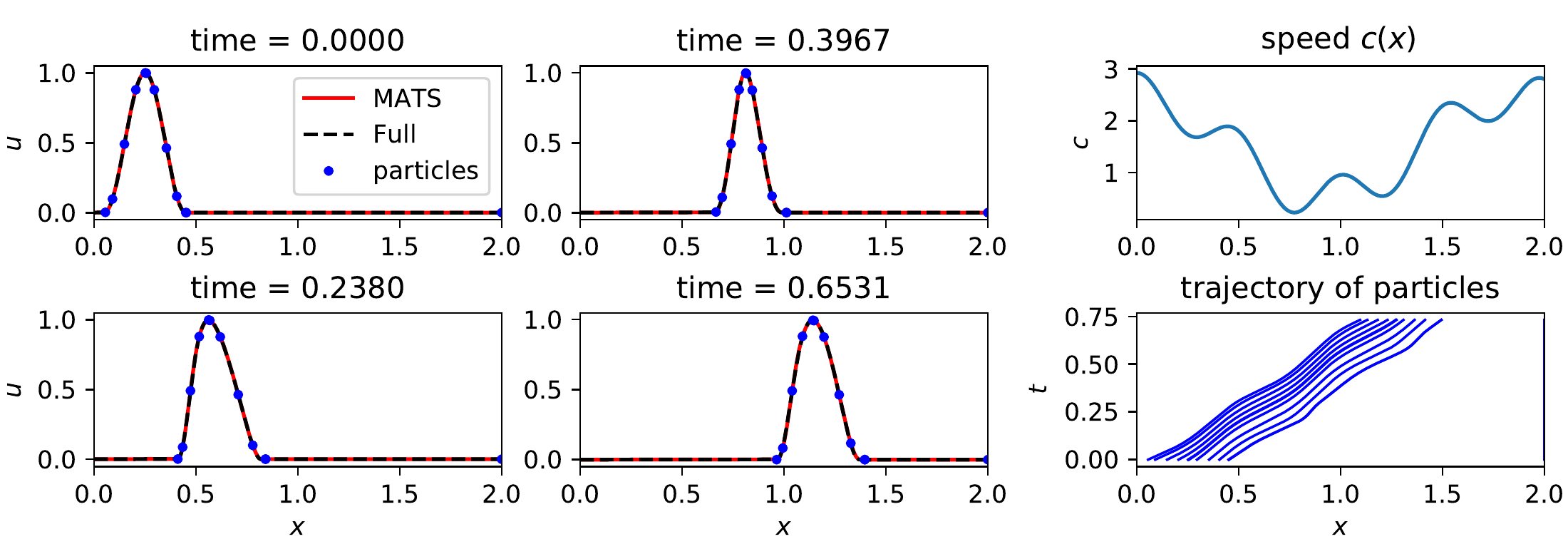}
\caption{Solution plot for the color equation example \cref{eq:prob-color}. The reduced-model solution $\hat{u}$, full-model solution $\uh$ and speed $c(x;\bmu)$ for the test parameter values $\mu_1 = \fixsp{0.43}$, $\mu_2=\fixsp{12.60}$, $\mu_3= \fixsp{3.39}$ corresponding to test parameter number 3 in \cref{tab:testerror-color}.  The dimensions of this reduced model is $(N,M) = (12,4)$.}
\label{fig:romsol-color}
\end{figure}

\subsection{Burgers' equation with reaction source} \label{sec:burgers}

Let us consider a nonlinear example, a Burgers' equation coupled with a
reactive equation. This is a model equation for the combustion problem where
chemical reaction occurs inside fluid flow. For fast reactions, computing
an accurate solution with standard methods is a computationally expensive task
\cite{Bourlioux91,leveque-yee}.
    \begin{equation}
        \left\{ \begin{aligned}
            \partial_t u + \partial_x \left( \frac{1}{2} u^2 \right) &= \mu_1 u(1-u)(u-\mu_2),
                \quad (x,t) \in (-5,5) \times (0, t_S(\bmu))\\
            u(x,0) &= u_0(x), \\
            u(0,t) &= u_0(0).
        \end{aligned} \right.
        \label{eq:prob-burgers}
    \end{equation}
The first parameter $\mu_1$ is the reaction coefficient, which determines
the time-scale of the reaction, the second parameter $\mu_2 \in (0,1)$
determines the unstable equilibrium, and $t_S(\bmu)$ denotes the shock
formation time.

We let the initial condition $u_0(x)$ be a sine-slope decreasing from 1 to 0,
\begin{equation}
    u_0(x) = 
    \begin{cases}
      1 & \text{if } x < -2, \\
        \frac{1}{2} - \frac{1}{2} \sin(\frac{\pi}{4} x) 
        & \text{if }-2 \le x \le 2, \\
      0 & \text{if } x > 2.
    \end{cases}
\end{equation}

For the full model, small mesh-size is necessary due to the time-step restriction $\mu_1 \Delta t \ll 1$, and since the spatial and temporal resolutions are related by the ratio $\lambda = \Delta t / \Delta x$ away from zero, small grid-size is required when the parameter $\mu_1$ is large (see \cite{leveque-yee}).

The strategy for collecting training samples is the same as in the previous example: We run the full model for various parameter values, then collect solutions which belong to a local time-interval close to the initial time, and these early solutions form the local snapshots $\cSl$. Then we widen the time-interval and collect solutions for the global snapshots $\cSg$.

For this example, we perform two experiments. In the individual experiments, the coefficient $\mu_1$ will be set to relatively lower and higher range corresponding to slow and fast reaction regimes. We distinguish the two ranges of parameters by the superscripts \textrm{S} and \textrm{F}. The parameters will be in the respective parameter domains,
\begin{equation}
 \cD^\textrm{S} 
    := 
    [ 50, 60] \times [0.1, 0.9], \quad
 \cD^\textrm{F} 
    := 
     [100,150]
    \times [0.1, 0.9].
    \label{eq:param-range-burgers}
\end{equation}
    
The parameters for the snapshots are 25 samples drawn from uniformly distributed random samples in their respective domains $\cD^\textrm{S},\cD^\textrm{F}$; we denote the set of these samples to be $\cNlslow,\cNlfast$. Since the reaction rate $\mu_1$ controls the time-scale, we collect the full-model solution during equally spaced times near the initial time $t = 0$ scaled by this rate,
\beq
    \cTlslow(\bmu) := \left\{\frac{0.1}{\mu_1} i: i = 0,1, ... ,4 \right\},
    \,\,
    \cTlfast(\bmu) := \left\{\frac{0.01}{\mu_1} i: i = 0,1, ... ,4 \right\}.
\eeq
The local snapshots are taken at the time-parameter values,
\beq
    \cHlslow := \{(t,\bmu): t \in \cTlslow(\bmu), \bmu \in \cNlslow \}, 
    \quad
    \cHlfast := \{(t,\bmu): t \in \cTlfast(\bmu), \bmu \in \cNlfast \}, 
\eeq
so we obtain $|\cSlslow| = |\cSlfast| = 125$ local snapshots.
    
Global snapshots $\cSgslow,\cSgfast$ \cref{eq:T_snapshot} are taken over longer time-intervals,
\begin{equation}
    \begin{aligned}
    \cHgslow := 
    \cHlslow \cup &\{(t,\bmu): t \in \cTgslow(\bmu), \bmu \in \cNlslow \}, \\
    \cHgfast := 
    \cHlfast \cup &\{(t,\bmu): t \in \cTgfast(\bmu), \bmu \in \cNlfast \},
    \end{aligned}
\end{equation}
in which
\begin{equation}
    \begin{aligned}
    \cTgslow(\bmu) &:= \left\{\frac{0.5 + 1.95i}{\mu_1}: i=0, ... ,9 \right\},
    \\
    \cTgfast(\bmu) &:= \left\{\frac{0.05 + 5.55i}{\mu_1}: i=0, ... ,9\right\}.
    \end{aligned}
\end{equation}
So for each experiment $|\cSgslow| = |\cSgfast| = 375$ snapshots are collected.
 
The online reduced model is run up to the time-steps
\begin{equation}
    K^\text{S} := 200,
    \quad
    K^\text{F}(\bmu) := 
    \left\lfloor \frac{1}{\Delta t} \left( \frac{10.0}{\mu_1} \right) \right\rfloor.
\end{equation}
For the case with slow reactions, we set a fixed number of time-steps
$K^\text{S}$ well before the shock forms, whereas for the fast case, we
time-step up to the shock formation. We do not expect the method to perform
well close to the shock formation time in the fast case, so we set a stopping
criteria: If the particles become too close or if the ordering of the particles
change before $K^\textrm{F}(\bmu)$ is reached, we discard the result (see
\cref{fig:nmerror-burgers}). 

The offline computation time not including the full model runtime was
104.80s. Runtime speedup behavior of the reduced model is very similar to
the previous example \cref{sec:color}, see \cref{fig:speedup}. The average
$L^1$ relative error over 10 different test values, for various dimensions of
$(N,M)$ is shown in \cref{fig:nmerror-burgers}. Although the problem is
nonlinear, the error behavior for the slow reaction case is similar to that of
the linear example (\cref{fig:speedup}). In contrast, for the fast reaction
case, one observes an erratic behavior in the error with respect to the
dimensions. Note that there are combinations of $(N,M)$ that do not reach the
shock formation time due to the loss of monotonicity in the particles. However,
the method does provide an approximation with the relative error in the order
of $10^{-3}$ for certain choices of $(N,M)$ near the diagonal ($N=M$). As in
the previous example, the achieved error indicates that the $(N,M)$-width for
this problem is also small, a numerical verification that the solution has a
small $(N,M)$-width, despite the nonlinearity in both the flux and the source
terms.

In the slow reaction case, the time-scale for the shock formation is
commensurate with the time-scale of convergence to the equilibrium of the
nonlinear reaction term \cite[Chapter 17]{fvmbook}. Therefore, the location
of shock has little variability for the slow reaction case, hence POD can
perform reasonably there. In the fast reaction case, there is more variability
in the location of the shock, causing POD to become inefficient.
    
A plot of the reduced-model and the full-model solution for the fast reaction case is shown in \cref{fig:romsol-burgers}. 

\begin{figure}
\centering
\includegraphics[width=0.8\textwidth]{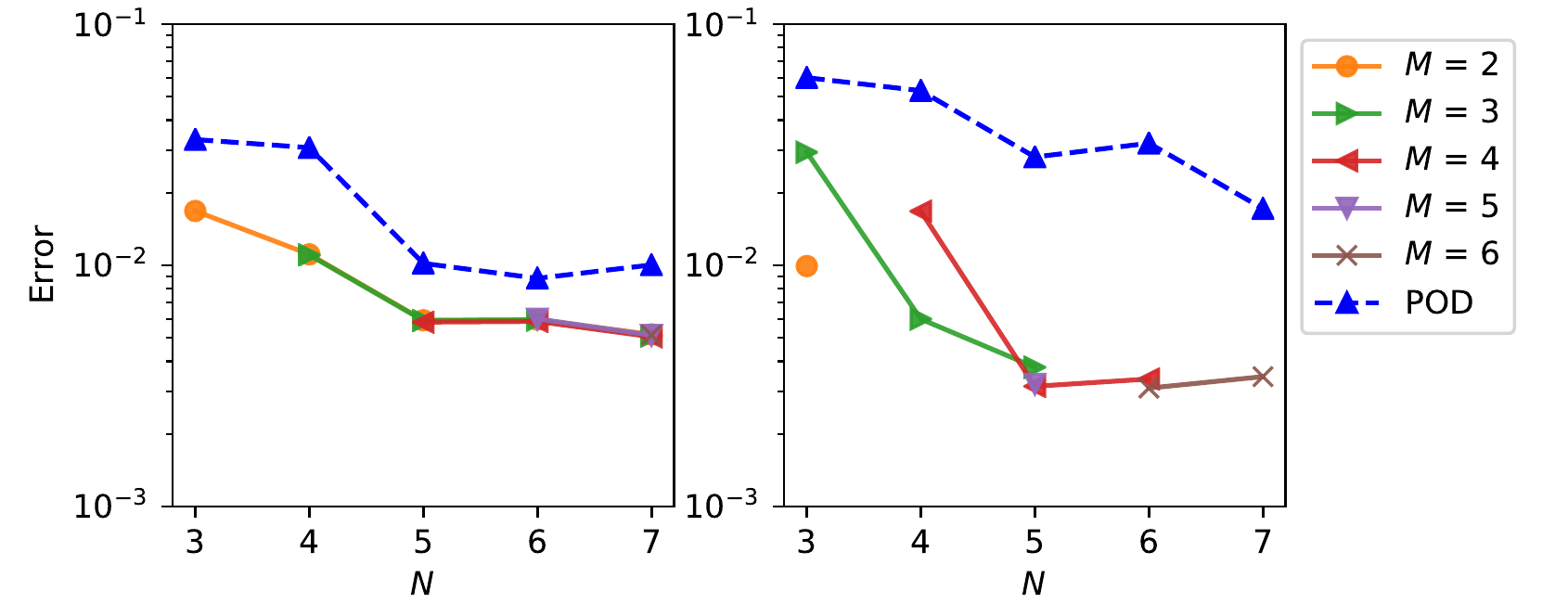}
\caption{Average relative error plot for the reactive Burgers' equation example \cref{eq:prob-burgers} for various dimensions $(N,M)$ for the slow reaction case (left) and the fast reaction case (right).}
\label{fig:nmerror-burgers}
\end{figure}

\begin{figure}
\centering
\includegraphics[width=0.7\textwidth]{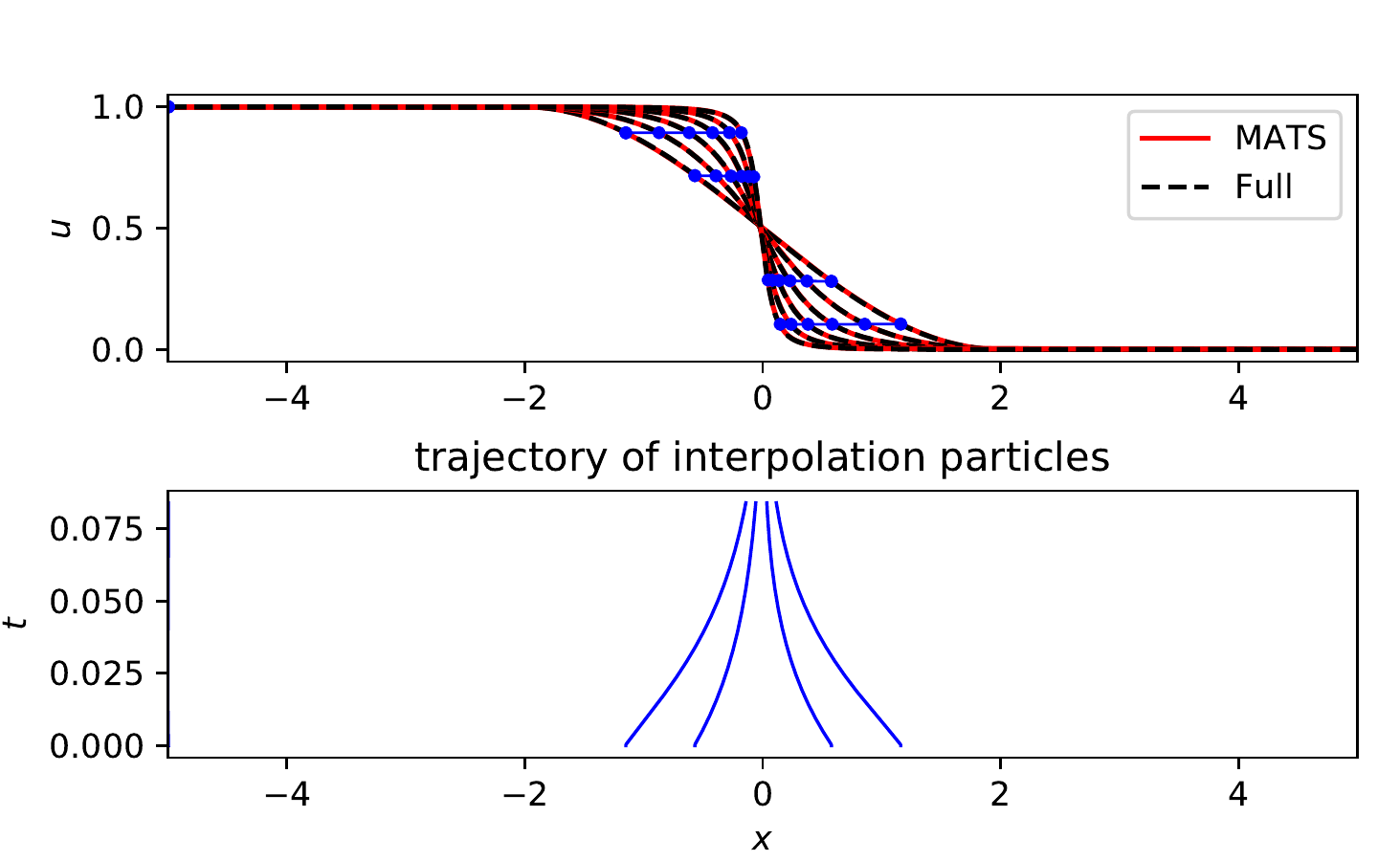}
\caption{Solution plot for the Burgers' equation example \cref{eq:prob-burgers}. The reduced-model solution $\hat{u}$ versus the full-model solution $\uh$ for the test parameter values $\mu_1=\fixsp{118.26}$, $\mu_2 =\fixsp{0.5240}$. The dimensions of this reduced model is $(N,M) = (5,4)$, and the solutions are shown at the times $t = \fixsp{0.0000}, \fixsp{0.0152}, \fixsp{0.0305}, \fixsp{0.0458}, \fixsp{0.0610}, \fixsp{0.0763} $.}
\label{fig:romsol-burgers}
\end{figure}

\subsection{Discussion}

We make some remarks regarding the  behavior of the error and shock formation for the Burgers' example. 

The error shown for both examples in \cref{fig:speedup,fig:nmerror-burgers} do not exhibit monotone decrease with respect to $N$ and $M$. Moreover, the accuracy does not appear to approach the levels of error for the full model. Similar behavior is also observed in the numerical examples of recent related works (see, e.g. \cite{P18AADEIM}). The likely contributing factors are (1) the low-rank Lagrangian frame causes the corresponding 1D mesh to be irregular, and the subspace $\VVrb(\bfeta)$ to be non-conformal with respect to the full-model discretization $\VVh$, (2) the first-order time-update with uniform time-step-size $\Delta t$ we employed in \cref{sec:rom} is a na\"ive projection of the first order full-model time-update which could yield less-than-optimal updates considering the projection to $\VVrb(t,\bmu)$, (3) the approximation error from the change of basis can accumulate over time, (4) better snapshot collection may be necessary for improved construction of $\VVrb$ and $\TTrb$. 

In the Burgers' example  shocks form, which implies that the characteristic curves merge. Such singular behavior in the characteristic curves is problematic for the current formulation of the reduced model. For example, in the update formula \cref{eq:T_update} the particles can switch their relative positions during time-stepping. This is a fairly standard phenomena for Lagrangian methods applied to the problem \cref{eq:prob} (see, e.g. \cite{mpm1}), but the case for particles $\{x_i^{(k)}\}_{i=1}^N$ has not yet been studied, to the best of our knowledge. Note that for approximations suggested in \cite{rim18b,rim19b} using DIP, monotonicity is preserved by construction and the merging characteristics are represented, allowing shock propagation for the reduced model.

\section{Conclusion and Outlook} \label{sec:conclusion}
Traditional model reduction methods based on linear approximations are
typically ineffective for systems governed by hyperbolic conservation laws
because the corresponding solution manifolds exhibit nonlinear transport
structures. The proposed approach MATS constructs nonlinear reduced models by
transporting subspaces along characteristic curves, which led to speedups of
orders of magnitude compared to traditional reduced models and full models in
the presented numerical examples.  This work highlighted two aspects of this
nonlinear reduction approach. First, the Kolmogorov $(N,M)$-width was
introduced, establishing a notion of best approximation of MATS from the
approximation theory point of view. Second, a computational procedure that
constructs reduced models based on MATS for physical systems governed by scalar
conservation laws was introduced. A crucial feature of the procedure is that
the governing equations of the conservation laws are time-stepped in the online
phase in contrast to, e.g., data-fit reduced models. Thus the proposed reduced
models are explicitly based on the physics (governing equations) of the systems
of interest, rather than being predictive in only a data-driven sense. 

The method introduced in this paper can potentially be generalized to
nonlinear systems of conservation laws in multiple spatial dimensions. The past
work of the first author \cite{rim18c} indicates extensibility to multiple
spatial dimensions; The first two authors have recently shown in \cite{Rim20b}
that shocks can be efficiently represented via a generalization of this reduced
model. But this extensibility itself is an open problem that we leave to future
works.

\clearpage

\section*{Acknowledgments}
The work of the first author (Rim) and second author (Peherstorfer) was partially supported by the Air Force Center of Excellence on Multi-Fidelity Modeling of Rocket Combustor Dynamics under Award Number FA9550-17-1-0195 and AFOSR MURI on multi-information sources of multi-physics systems under Award Number FA9550-15-1-0038 (Program Manager Dr.~Fariba Fahroo). The third author (Mandli) was partially supported by the NSF under Grant No. OAC-1735609 and DMS-1720288. The numerical experiments were computed with support through the NYU IT High Performance Computing resources, services, and staff expertise.

\clearpage

\appendix

\section{Supplementary details regarding \cref{expl:burgers_random,expl:wave}}
\label{sec:details}

\subsection{A sketch of proof for \cref{expl:burgers_random}}

Recall that the manifold contains discretized versions of white noise, of the
form $\sum_{n=1}^{N_\delta} b_n \varphi_n$ with the coefficients
$\{b_n\}_{n=1}^{N_\delta}$ drawn from independent normal distributions (where
$\{\varphi_n\}_{n=1}^{N_\delta}$ are the nodal basis functions
\cref{eq:fom_repr}).  Let $\Dom := (0,1)$, and let us draw $S$ snapshots from
the manifold. Then the inner products between the snapshots are given by
\beq
    \bfC_{\ell, \ell'}
    :=
    \left(
    \sum_{n=1}^{N_\delta} b_{n\ell} \varphi_n(x),
    \sum_{n=1}^{N_\delta} b_{n\ell'} \varphi_n(x)
    \right),
    \quad
    \text{ where } \ell, \ell' = 1, ... , S.
\eeq
As $N_\delta \to \infty$, by the law of large numbers $\bfC_{\ell,\ell'}$
converges to the identity $\bfI_{S}$ almost surely. Consequently, the
calculation of the $(N,M)$-width of the original random manifold is equivalent
to that of the finite-dimensional and deterministic manifold
\beq
    \hat{\cM}_\delta
    :=
    \left\{ 
        u \in \VV_\delta : 
        \Norm{u}{L^2(\Dom)} = 1 
    \right\}.
\eeq
This manifold has the Lebesgue covering dimension $N_\delta -1$, and one can
show that $\dim \cV(\VVrb, \TTrb) \le \dim(\VVrb) + \dim(\TTrb)$. This implies
that $\hat{\cM} \setminus \cV(\VVrb,\TTrb)$ has a large dimension (at the scale
of $N_\delta$) regardless of the choice of $\VVrb$ and $\TTrb$. Since all
members of $\hat{\cM}_\delta$ have unit $L^2$-norm, this implies the
$(N,M)$-width has the scale of $1$ for $N,M \ll N_\delta$.

\subsection{Remarks for \cref{expl:wave}}

Here we give a detailed explanation of how the $(N,M)$-width \cref{eq:nmwidth}
and $(N,M,L)$-width \cref{eq:nmlwidth} show distinct decay behavior for the
solution manifold of the 1D wave equation.

Let us consider the manifold containing the solutions to the wave equation with
a compactly supported initial condition $u_0$ and zero initial velocity
$u_t(x,0) = 0$. We then choose a final time small enough so that the support of
the solution does not interact with the boundary (alternatively, one may
choose aribtrary initial data and a radiation boundary condition). The
solution is given by the d'Alembert's formula
\beq
    u(x,t)
    =
    \half
    \left(
        u_0(x-t)
        +
        u_0(x+t)
    \right).
    \label{eq:wave_sol}
\eeq
Therefore, the solution $u(x,t)$ can be expressed as a superposition of two
approximations of the form \cref{eq:repr-Tpb}. (That is, we choose $\TTrb =
\Span_{\TT} \{1, \Id\}$ for this approximation). The solution manifold
$\cM_\delta$ will be given as the approximation $u_\delta(x,t)$ of $u(x,t)$ in
$\VV_\delta$ satisfying \cref{eq:fm_error}, and we choose $\VVrb = \Span
\{u_\delta(x,0)\}$. This constructively proves the claim that the manifold of
this PDE $\cM_\delta$ satisfies $d(1,2,2;\cM_\delta) = \cO(\delta^r)$. Below,
we refer to this approximation as the two-term approximation.

\begin{figure}
\centering
\includegraphics[width=0.9\textwidth]{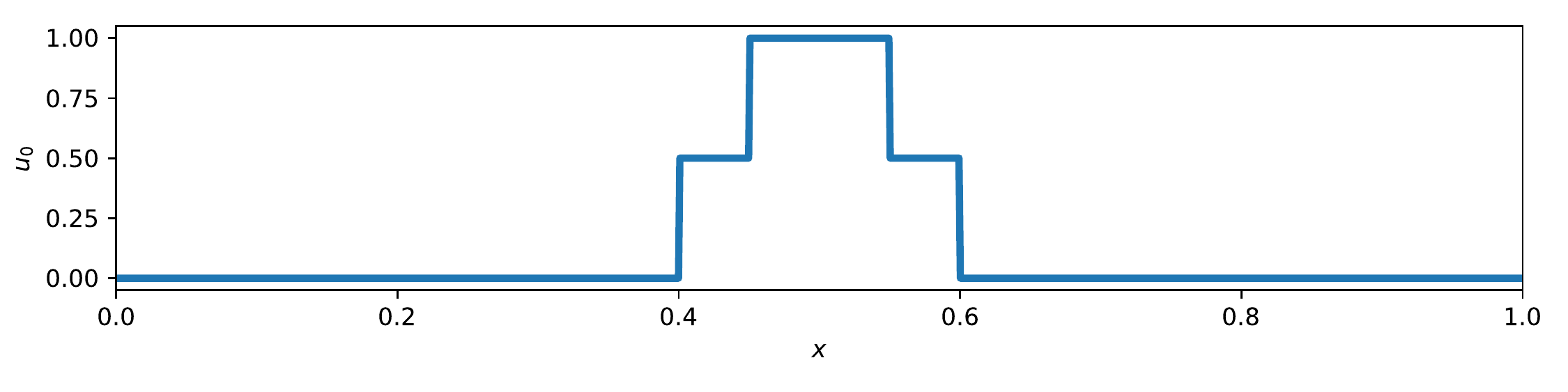}
\caption{The initial condition $u_0$ \cref{eq:expl_wave_u0}.}
\label{fig:expl_wave_u0}
\end{figure}

Next, we provide the rationale for expecting $d(N,2;\cM_\delta)$ to decay
slowly with respect to $N$. The argument here applies to any initial condition
with more than two jump discontinuities in the initial condition, but we shall
keep the discussion elementary by focusing on a specific piecewise constant
initial condition,
\beq
    u_0 (x)
    :=
    \begin{cases}
    1.0 & \text{ if } 0.45 \le x < 0.55,
    \\
    0.5 & \text{ if } 0.4 \le x < 0.45\text{ or } 0.55 \le x < 0.6
    \\
    0 & \text{ otherwise.}
    \end{cases}
    \label{eq:expl_wave_u0}
\eeq
A plot of $u_0$ is shown in \cref{fig:expl_wave_u0}. 

Consider using a single term of the form \cref{eq:repr-Tpb} to approximate
\cref{eq:wave_sol}. In contrast to the two-term case, a straightforward way to
exploit the traveling structure is available only if the supports of the
left-going and right-going waves do not overlap. A depiction is given in
\cref{fig:expl_wave_sep}. In this regime, the interval where the solution is
constant, e.g. $(0.5-\eps, 0.5+\eps)$ for a small $\eps > 0$, can be dilated to
produce a traveling wave. Set two transport modes, 
\beq
    v_1(x) = \Id(x),
    \quad
    v_2(x) = \begin{cases}
                \frac{1}{10\eps}(x - 0.5) & \text{ if } |x - 0.5| < \eps,\\
                0.1    & \text{ if } x > 0.5 + \eps, \\
               -0.1    & \text{ if } x < 0.5 - \eps, \\
             \end{cases}
\eeq
The two maps are illustrated on the right of \cref{fig:expl_wave_sep}. The
transport map constructed with $v_1$ and $v_2$ does not approximate the
characteristic curves, unlike in the two-term approximation. For concreteness,
one may fix $\TTrb = \Span_\TT \{v_1,v_2\}$ as the subset of diffeomorphisms,
despite the possibility this is not the optimal set of transport maps with
dimension two.

\begin{figure}
\centering
\includegraphics[width=0.95\textwidth]{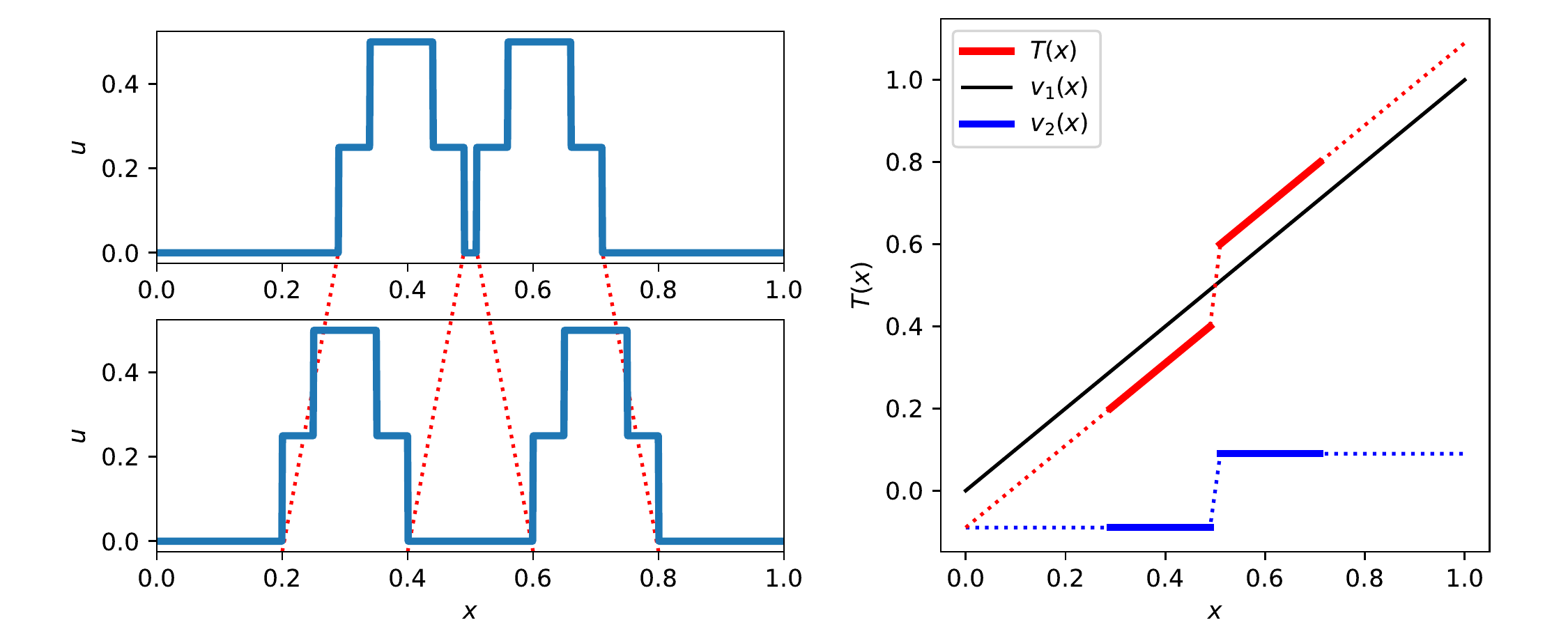}
\caption{The time-evolution of the left- and right-going waves (left
column), and an example of transport modes $v_1,v_2$, along with the transport
map $T \in \Span_\TT \{v_1, v_2\}$ (right column). The dotted red lines in the
left column correspond to two dashed red lines in the right figure.}
\label{fig:expl_wave_sep}
\end{figure}

The approximation task is non-trivial when the left- and right-going waves are
not separated by an interval of constancy. The solution of the wave equation
during these times are plotted in \cref{fig:expl_wave_vars}. For any function
$w \in \VV$, composition with the transport map $T \in \TTrb$ or its inverse
preserves the range of values, that is $w(\Dom) \supset w(T^{-1}(\Dom))$. In
our example, the range of values $u(\Dom,t)$ are point sets that changes
discontinuously over time, hence $\dim \VVrb > 1$ is required to avoid
pointwise $\cO(1)$ error. 

\begin{figure}
\centering
\includegraphics[width=0.95\textwidth]{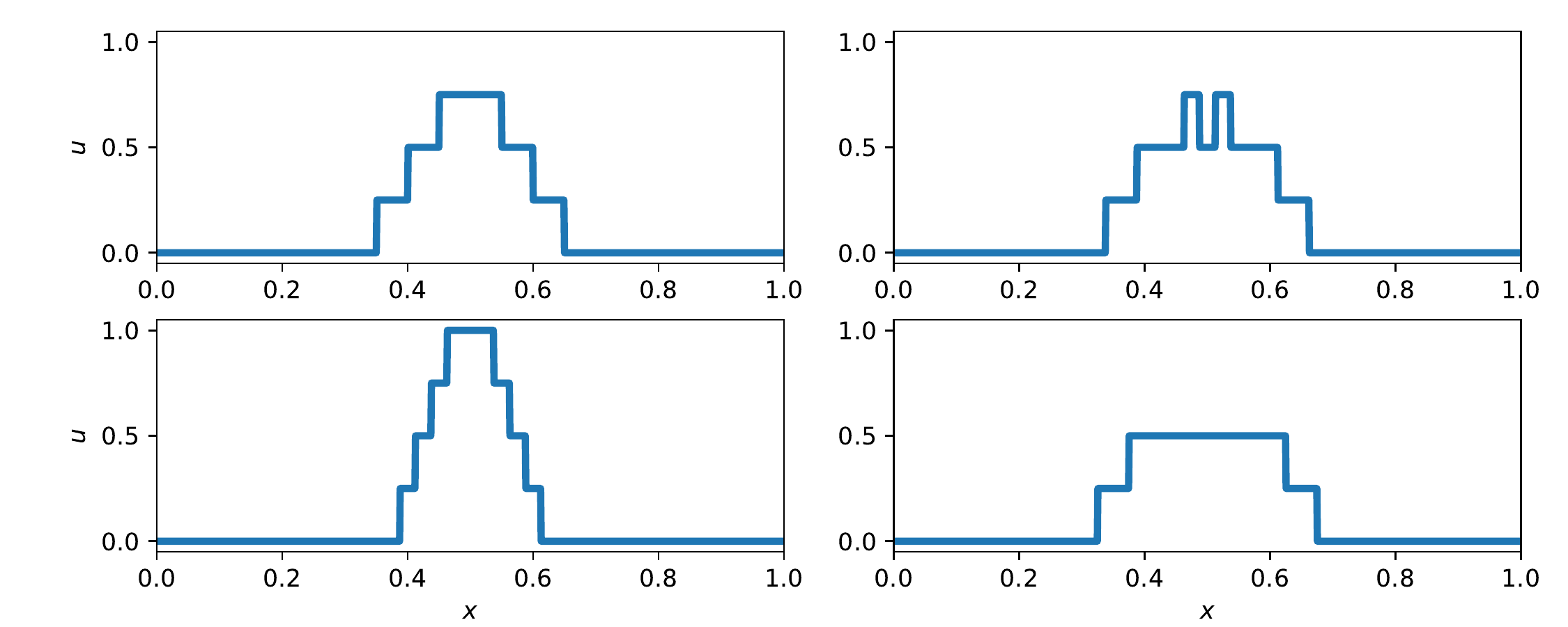}
\caption{The evolution of the wave equation with the initial condition $u_0$
\cref{eq:expl_wave_u0}.}
\label{fig:expl_wave_vars}
\end{figure}

Moreover, the transport maps in $\TTrb$ are not able to align the sharp
gradients of the evolving solution. This can be shown by comparing the number
of constraints necessary with $\dim \TTrb$. In this case, the standard
Kolmogorov $N$-width estimates for the advection equation
\cite{Ohlberger16,welper17} for a single jump applies locally, leading to a
slow decay in the error. Thus, we expect $d(N,2;\cM_\delta)$ to decay slowly. A
rigorous proof in a more general setting will require a more comprehensive
treatment and is outside the scope of this work.

\section{Transport modes of the advection equation} \label{appdx:expl_advec}

We give a particularly simple example of DIP maps to help illustrate the
concept. For another example, we refer the reader to \cite{rim18b}. 

Consider the advection equation $u_t + u_x = 0$ on $\Dom = (0,5)$ with the
initial condition $u(x,0) = u_0(x)$ in which $u_0$ is given by
\beq
    u_0(x) = 
    \begin{cases}
        \half  + \half \cos (2\pi (x-1)) & \text{ if } |x - 1| < \half,\\
        0 & \text{ otherwise.}
    \end{cases}
\eeq
The function $u_0$ is plotted in \cref{fig:advection_sig}. The solution has the
signature $(0, +1, -1, 0)$ before the solution interacts with the boundary, so
the solution satisfies the signature condition. The computed transport mode in
$\supp u_0$ is approximately the constant function. This corresponds to the
translation motion in the solution manifold.

\begin{figure}[ht!]
\centering
\includegraphics[width=0.95\textwidth]{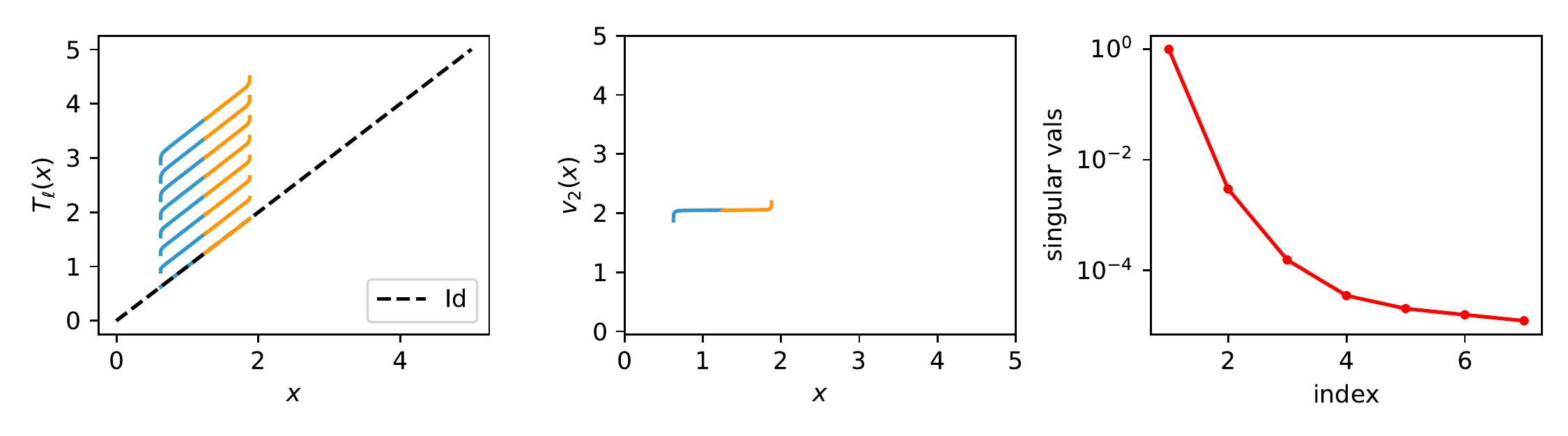}
\caption{The DIP maps $T_\ell(x)$ (left), the computed transport mode
$v_2(x)$ (middle) and the singular values of the matrix $\bfC$ \cref{eq:Cll}
(right).}
\label{fig:expl_advec_tmodes}
\end{figure}

\end{document}